\documentclass[a4paper,12pt]{article}

\topmargin=-1cm
\oddsidemargin=-1cm
\textheight=24cm
\textwidth=17cm

\usepackage{amsfonts}
\usepackage{graphicx}
\usepackage{subfig}
\usepackage{amsmath}
\usepackage{amssymb}
\usepackage{enumitem}
\usepackage{url}
\usepackage{color}
\usepackage{authblk}
\usepackage{amsthm}
\usepackage{multicol}

\theoremstyle{definition}

\theoremstyle{theorem}
\newtheorem{theorem}{Theorem}[section]
\theoremstyle{corollary}
\newtheorem{corollary}{Corollary}[section]
\theoremstyle{proposition}
\newtheorem{proposition}{Proposition}[section]
\theoremstyle{remark}
\newtheorem{remark}{Remark}

\title{\bf {\sc Explicit solutions for a non-classical heat conduction problem for a semi-infinite strip with a non-uniform heat source}}\vspace{1cm}

\author[$\dag$]{Andrea N. Ceretani}
\author[$\dag$]{Domingo A. Tarzia}
\author[$\ddag$]{Luis T. Villa}

\affil[$\dag$]{\small {CONICET-Depto. Matem\'atica, Facultad de Ciencias Empresariales, Universidad Austral,
Paraguay 1950, S2000FZF Rosario, Argentina. E-mail: \textcolor{blue}{aceretani@austral.edu.ar}; \textcolor{blue}{dtarzia@austral.edu.ar}}}

\affil[$\ddag$]{\small INIQUI (CONICET - UNSa), Facultad de Ingenier\'ia, Universidad Nacional de Salta, Av. Bolivia 5150, 4400 Salta, Argentina. E-mail: \textcolor{blue}{villal@unsa.edu.ar}}

\date{}

\begin{document}
\maketitle

\thispagestyle{empty} 

\begin{abstract}
A non-classical initial and boundary value problem for a non-homogeneous one- dimensional heat equation for a semi-infinite material $x>0$ with a zero temperature boundary condition at the face $x=0$ is studied with the aim of finding explicit solutions. It is not a standard heat conduction problem because a heat source $-\Phi(x)F(V(t),t)$ is considered, where $\Phi$ and $F$ are real functions and $V$ represents the heat flux at the face $x=0$.

Explicit solutions independents of the space or temporal variables are given. Solutions with separated variables when the data functions are defined from the solution $X=X(x)$ of a linear initial value problem of second order and the solution $T=T(t)$ of a non-linear (in general) initial value problem of first order which involves the function $F$, are also given and explicit solutions corresponding to different definitions of the function $F$ are obtained. A solution by an integral representation depending on the heat flux at the boundary $x=0$ for the case in which
$F=F(V,t)=\nu V$, for some $\nu>0$, is obtained and explicit expressions for the heat flux at the boundary $x=0$ and for its corresponding solution are calculated when $h=h(x)$ is a potential function and $\Phi=\Phi(x)$ is given by $\Phi(x)=\lambda x$, $\Phi(x)=-\mu\sinh{(\lambda x)}$ or $\Phi(x)=-\mu\sin{(\lambda x)}$, for some $\lambda>0$ and $\mu>0$.

The limit when the temporal variable $t$ tends to $+\infty$ of each explicit solution obtained in this paper is studied and the "controlling" effects of the source term $-\Phi F$ are analysed by comparing the asymptotic behaviour of each solution with the asymptotic behaviour of the solution to the same problem but in absence of source term.

Finally, a relationship between this problem with another non-classical initial and boundary value problem for the heat equation is established and explicit solutions for this second problem are also obtained.

As a consequence of our study, several problems which can be used as benchmark problems for testing new numerical methods for solving partial differential equations are obtained.
\end{abstract}

{\bf Keywords}: Non-classical heat equation, Nonlinear heat conduction problems, Explicit solutions, Volterra integral equations.\\

{\bf 2010 AMS subjet classification}: 35C05, 35C15, 35C20, 35K55, 45D05, 80A20.

\newpage

\section{Introduction}

In this paper we study the following non-classical initial and boundary value problem for a non-homogeneous one-dimensional heat equation (Problem P):
\begin{align}
\label{eq:u}&u_t(x,t)-u_{xx}(x,t)=-\Phi(x)F(u_x(0,t),t)\hspace{0.5cm}	&x>0,\hspace{0.5cm}&t>0 \\
\label{ic:u}&u(x,0)=h(x)\hspace{0.5cm}	&x>0\hspace{0.5cm}&\\
\label{bc:u}&u(0,t)=0	&	&t>0
\end{align}

with the aim of finding explicit solutions, where $u=u(x,t)$ is the unknown temperature function, defined for $x\geq0$ and $t\geq0$, $\Phi=\Phi(x)$, $h=h(x)$ and $F=F(V,t)$ are given functions defined, respectively, for $x>0$ and $V\in\mathbb{R}$, $t>0$, and the function $h$ satisfies the following compatibility condition:
\begin{equation}\label{cc:h}
\displaystyle\lim_{x\to 0^+}h(x)=0\text{.}
\end{equation}

This problem is motivated by the regulation of the temperature $u=u(x,t)$ of an isotropic medium, occupying the semi-infinite spatial region $x>0$, under a non-uniform heat source $-\Phi(x)F(u_x(0,t),t)$ which provides a heater or cooler effect depending on the properties of the function $F$ respect to the heat flux $u_x(0,t)$ at the boundary $x=0$ \cite{Ca1984,CaJa1959}. For example, if:
\begin{equation*}
\Phi(x)>0\hspace{0.5cm}\text{ for }x>0,\hspace{0.5cm}\text{ and }\hspace{0.5cm}u_x(0,t)F(u_x(0,t),t)>0\hspace{0.5cm}\forall\,t>0\text{,}
\end{equation*}
then the source term is a cooler when $u_x(0,t)>0$ and a heater when $u_x(0,t)<0$. Some references in this subjet are \cite{BeTaVi2000,CaYi1989,Fr1964,GlSp1981,GlSp1982,Ke1990,KePr1998,LiMu2014,
QuLo2014,So1998,TaVi1998,Vi1986}. Problem P for the slab $0<x<1$ has been studied in \cite{SaTaVi2011}. Recently, free boundary problems (Stefan problems) for the non-classical heat equation have been studied in \cite{BrNa2014-2,BrTa2006-1,BrTa2006-2,BrTa2010,DuLo2014,KoVa2010}, where some explicit solutions are also given, and a first study of non-classical heat conduction problem for a $n$-dimensional material has been given in \cite{BoTa2014}. Numerical schemes for Problem P when a non-homogeneous boundary condition is considered have been studied in \cite{Ko2008} and numerical solutions have been given for two particular choices of data function corresponding to problems with known explicit solutions.

The organization of the paper is the following: in Section \ref{s:EsP}, we give explicit solutions to Problem P. We split this section into three parts. In the first one, we give explicit solutions which are independents of the space variable $x$ or the temporal variable $t$. In the second part, we find solutions with separated variables when the functions $h=h(x)$ and $\Phi=\Phi(x)$ are proportional to the solution $X=X(x)$ of a linear initial value problem of second order and the function $F=F(V,t)$ is defined from the solution $T=T(t)$ of a non-linear (in general) initial value problem of first order. As a consequence, we give explicit solutions with separated variables corresponding to different definitions of the function $F$. Finally, in the third part, we find solutions by an integral representation which depends on the heat flux at the boundary $x=0$ \cite{TaVi1998} for the case in which $F$ is defined by $F(V,t)=\nu V$, for some $\nu>0$. Moreover, we find explicit expressions for the heat flux at the boundary $x=0$ and for its corresponding solution to Problem P, when $h=h(x)$ is a potential function and $\Phi=\Phi(x)$ is given by $\Phi(x)=\lambda x$, $\Phi(x)=-\mu\sinh{(\lambda x)}$ or $\Phi(x)=-\mu\sin{(\lambda x)}$, for some $\lambda>0$ and $\mu>0$. In Section \ref{s:Cp}, we deal with the problem of "controlling" solutions of Problem P through the source term $-\Phi(x)F(u_x(0,t),t)$. We compare the asymptotic behaviour of each explicit solution $u$ obtained for the Problem P with the asymptotic behaviour of the solution $u_0$ of the same problem but in absence of source term, and we obtain conditions for the parameters involved in the definition of $-\Phi(x)F(V,t)$ under which the asymptotic behaviour of $u$ can be controlled respect to the asymptotic behaviour of $u_0$. Finally, in Section \ref{s:EsPP}, we recall the relationship between Problem P with another non-classical initial and boundary value problem for the heat equation \cite{TaVi1998}, given by (Problem $\widetilde{P}$):
\begin{align}
\label{eq:v}&v_t(x,t)-v_{xx}(x,t)=-\widetilde{\Phi}(x)\widetilde{F}(v(0,t),t)\hspace{0.5cm}	&x>0,\hspace{0.5cm}&t>0 \\
\label{ic:v}&v(x,0)=\tilde{h}(x)\hspace{0.5cm}	&x>0\hspace{0.5cm}&\\
\label{bc:v}&v_x(0,t)=\tilde{g}(t)	&	&t>0
\end{align}
and we find explicit solutions to Problem $\widetilde{P}$.

As a consequence of our study, we obtain some particular cases of Problem P and Problem $\widetilde{\text{P}}$ which can be used as benchmark problems for testing new numerical methods for solving partial differential equations.

\section{Explicit solutions for Problem P}\label{s:EsP}

\subsection{Explicit solutions independents of space or temporal variables}\label{ss:In}

\begin{theorem}\label{th:In}
\begin{enumerate}
\item[]
\item[1.] Problem P does not admit any non-trivial solution independent of the space variable $x$.
\item[2.] If:
\begin{enumerate}
\item $F$ is the zero function and $h$ is defined by:
\begin{equation}\label{th_In:h}
h(x)=\eta x,\hspace{0.5cm}x\geq0\text{,}
\end{equation}
for some $\eta>0$,
\end{enumerate}
or
\begin{enumerate}[resume]
\item $F$ is a constant function defined by:
\begin{equation}\label{th_In:F}
F(V,t)=\nu,\hspace{0.5cm}V\in\mathbb{R},\,t>0\text{,}
\end{equation}
for some $\nu\in\mathbb{R}-\{0\}$, and $h$ is a twice differentiable function  such that $h(0)$ exists and:
\begin{equation}\label{th_In:h''}
h''(x)=\nu\Phi(x),\hspace{0.5cm}x>0\text{,}
\end{equation}
\end{enumerate}
then the function $u$ defined by:
\begin{equation}\label{th_In:uu}
u(x,t)=h(x),\hspace{0.5cm}x\geq0,\,t\geq0\text{,}
\end{equation}
is a solution to Problem P independent of the temporal variable $t$.
\end{enumerate}
\end{theorem}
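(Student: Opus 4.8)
The plan is to prove each of the two parts by direct substitution into the governing equations, checking the heat equation, the initial condition, and the boundary condition in turn.

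The plan is to prove both assertions by direct substitution into the three defining relations \eqref{eq:u}, \eqref{ic:u} and \eqref{bc:u} of Problem P.

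For the first part I would suppose $u$ depends only on $t$, say $u(x,t)=g(t)$. The boundary condition \eqref{bc:u} then gives $g(t)=u(0,t)=0$ for all $t>0$, so $u\equiv0$ and no non-trivial $x$-independent solution can exist. This observation is already decisive; for completeness one may note that the same conclusion is consistent with \eqref{eq:u}, since $u_{xx}\equiv0$ and $u_x(0,t)\equiv0$ reduce that equation to $g'(t)=-\Phi(x)F(0,t)$, whose right-hand side depends on $x$ unless it vanishes identically.

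For the second part I would take $u(x,t)=h(x)$ as in \eqref{th_In:uu}, so that $u_t\equiv0$, $u_{xx}=h''(x)$, and the heat flux $u_x(0,t)=h'(0)$ is constant in $t$. The initial condition \eqref{ic:u} holds automatically because $u(x,0)=h(x)$, while the boundary condition \eqref{bc:u} reduces to $h(0)=0$, which is guaranteed by the compatibility condition \eqref{cc:h} together with the existence of $h(0)$ (and is immediate in case (a), where $h(x)=\eta x$). Substituting into \eqref{eq:u} then leaves the single requirement $h''(x)=\Phi(x)F(h'(0),t)$ for all $x>0$, $t>0$. In case (a) the function $F$ is zero, so the right-hand side vanishes and the equation reads $h''(x)=0$, satisfied by $h(x)=\eta x$; in case (b) one has $F(h'(0),t)=\nu$ by \eqref{th_In:F}, so the equation becomes $h''(x)=\nu\Phi(x)$, which is precisely hypothesis \eqref{th_In:h''}. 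In either case all of \eqref{eq:u}, \eqref{ic:u} and \eqref{bc:u} are met, so $u(x,t)=h(x)$ is a solution independent of $t$.

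Since the argument is entirely computational, I do not expect a genuine obstacle; the only point requiring care is verifying the boundary condition $u(0,t)=0$, which is exactly where the compatibility condition \eqref{cc:h} enters to ensure $h(0)=0$.
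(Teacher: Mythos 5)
Your proposal is correct and follows essentially the same route as the paper: part 1 is settled by observing that the boundary condition $u(0,t)=0$ forces any $x$-independent solution to vanish (the paper additionally invokes the compatibility condition \eqref{cc:h} to cover $t=0$), and part 2 is the direct substitution that the paper leaves as "easy to check." Your write-up simply makes explicit the verification the paper omits, including the key reduction $h''(x)=\Phi(x)F(h'(0),t)$ in each case.
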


\begin{proof}
\begin{enumerate}
\item[]
\item[1.] If the Problem P has a solution $u$ independent of the space variable $x$ then:
\begin{equation}
u(x,t)=u(0,t)=0,\,x>0,\,t>0\hspace{0.5cm}\text{and}\hspace{0.5cm}u(0,0)=\displaystyle\lim_{x\to 0^+}h(x)=0\text{.}
\end{equation}
Therefore $u$ is the zero function.
\item[2.] It is easy to check that the function $u$ given in (\ref{th_In:uu}) is a solution to the Problem P given in this item.
\end{enumerate}
\end{proof}

\subsection{Explicit solutions with separated variables}\label{ss:Sv}

\begin{theorem}\label{th:Sv}
Let $\lambda,\,\eta,\,\delta\in\mathbb{R}-\{0\}$. If $\Phi$, $h$ and $F$ are defined by:
\begin{equation}\label{th_Sv:h-Phi-F}
\Phi(x)=\lambda X(x),\,\,h(x)=\eta X(x),\,x>0\hspace{0.25cm}\text{and}\hspace{0.25cm}F=F(\delta T(t),t),\,t>0\text{,}
\end{equation}
where $X$ is given by:
\begin{equation}\label{th_Sv:X}
X(x)=\left\{\begin{array}{ccc}
             \frac{\delta}{\sqrt{\sigma}}\sinh{(\sqrt{\sigma}x)} & &\text{if }\sigma>0\\
             \frac{\delta}{\sqrt{|\sigma|}}\sin{(\sqrt{|\sigma|}x)}& & \text{if }\sigma<0\\
			 \delta x& &\text{if }\sigma=0
            \end{array}\right.,\hspace{0.5cm}x\geq0
\end{equation}
and $T$ is the solution of the initial value problem:
\begin{align}
\label{th_Sv:T}&\dot{T}(t)-\sigma T(t)=-\lambda F(\delta T(t),t),\hspace{0.5cm}t>0\\
\label{th_Sv:TT}&T(0)=\eta \text{,}
\end{align}
then the function $u$ given by:
\begin{equation}\label{th_Sv:u}
u(x,t)=X(x)T(t),\hspace{0.5cm}x\geq 0,\,t\geq 0
\end{equation}
is a solution with separated variables to Problem P.
\end{theorem}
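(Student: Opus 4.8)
The plan is to verify directly that the candidate $u(x,t)=X(x)T(t)$ satisfies the heat equation \eqref{eq:u}, the initial condition \eqref{ic:u} and the boundary condition \eqref{bc:u}, so the proof is essentially a substitution argument once the correct structural properties of $X$ and $T$ are isolated. Existence of $T$ is not at issue, since it is assumed in the hypotheses (it is called ``the solution'' of \eqref{th_Sv:T}--\eqref{th_Sv:TT}); the whole content is in checking that the prescribed shapes of $\Phi$, $h$, $X$ and the ODE for $T$ dovetail correctly.

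The key preliminary observation I would establish is that the function $X$ defined in \eqref{th_Sv:X} is, in each of the three cases $\sigma>0$, $\sigma<0$ and $\sigma=0$, the solution of the second order linear initial value problem
\begin{equation*}
X''(x)=\sigma X(x),\qquad X(0)=0,\qquad X'(0)=\delta,\qquad x\geq 0\text{.}
\end{equation*}
This is a routine differentiation in each case; for instance, when $\sigma>0$ one finds $X'(x)=\delta\cosh(\sqrt{\sigma}\,x)$ and $X''(x)=\delta\sqrt{\sigma}\sinh(\sqrt{\sigma}\,x)=\sigma X(x)$, and the trigonometric and linear cases are analogous. The two facts I will actually use downstream are the relation $X''=\sigma X$ on $x>0$ and the boundary values $X(0)=0$ and $X'(0)=\delta$.

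With this in hand I would compute the partial derivatives of $u=XT$: namely $u_t(x,t)=X(x)\dot T(t)$, $u_{xx}(x,t)=X''(x)T(t)=\sigma X(x)T(t)$, and crucially the boundary flux $u_x(0,t)=X'(0)T(t)=\delta T(t)$. Substituting into the left-hand side of \eqref{eq:u} gives
\begin{equation*}
u_t-u_{xx}=X(x)\bigl(\dot T(t)-\sigma T(t)\bigr)\text{,}
\end{equation*}
and here the ordinary differential equation \eqref{th_Sv:T} turns the parenthesis into $-\lambda F(\delta T(t),t)$. On the other hand, using $\Phi(x)=\lambda X(x)$ together with the identity $u_x(0,t)=\delta T(t)$ just obtained, the right-hand side $-\Phi(x)F(u_x(0,t),t)$ equals $-\lambda X(x)F(\delta T(t),t)$ as well, so \eqref{eq:u} holds for $x>0$, $t>0$.

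Finally I would check the data. The initial condition follows from $u(x,0)=X(x)T(0)=\eta X(x)=h(x)$ by \eqref{th_Sv:TT} and the definition of $h$ in \eqref{th_Sv:h-Phi-F}, while the homogeneous boundary condition $u(0,t)=X(0)T(t)=0$ and the compatibility condition \eqref{cc:h} both follow immediately from $X(0)=0$. There is no genuine obstacle in this verification; the only point demanding care is to match the factor $\delta$ coming from $X'(0)$ with the first argument of $F$, which is precisely what forces the scaling $F=F(\delta T(t),t)$ in \eqref{th_Sv:h-Phi-F} and the coefficient $-\lambda$ in the ODE \eqref{th_Sv:T}. Getting these two constants consistent is the whole reason the separated-variables ansatz closes.
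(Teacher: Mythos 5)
Your proof is correct and follows exactly the route the paper intends: the paper's own proof is just the one-line claim that ``an easy computation shows'' $u=XT$ solves Problem P, and your verification (using $X''=\sigma X$, $X(0)=0$, $X'(0)=\delta$, which the paper records in the remark immediately after the theorem) is precisely that computation written out in full. In particular, your identification of $u_x(0,t)=\delta T(t)$ as the step that forces the scaling $F=F(\delta T(t),t)$ and the coefficient $-\lambda$ in the ODE is the right reading of why the ansatz closes.
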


\begin{proof}
An easy computation shows that the function $u$ given in (\ref{th_Sv:u}) is a solution to Problem P.
\end{proof}

\begin{remark}
The function $X$ given in (\ref{th_Sv:X}) also can be seen as the solution  of a linear initial value problem of second order, in fact $X$ satisfies:
\begin{align}
&X''(x)-\sigma X(x)=0,\hspace{0.5cm}x>0\\
&X(0)=0\\
&X'(0)=\delta\text{.}
\end{align}
\end{remark}

Under the hypothesis of the previous theorem, the problem of finding explicit solutions with separated variables to Problem P reduces to solving the initial value problem (\ref{th_Sv:T})-(\ref{th_Sv:TT}).

With the spirit of exhibit explicit solutions to Problem P, our next result summarizes explicit solutions to the initial value problem (\ref{th_Sv:T})-(\ref{th_Sv:TT}) corresponding to three different definitions of the function $F$.

\begin{proposition}\label{pr:Sv}
If in Theorem \ref{th:Sv} we consider:
\begin{enumerate}
\item Function $F$ defined by:
\begin{equation}\label{pr_Sv:F}
F(V,t)=\nu V,\hspace{0.5cm}V\in\mathbb{R},\,t>0\text{,}
\end{equation}
for some $\nu\in\mathbb{R}-\{0\}$, then the function $T$ is given by:
\begin{equation}\label{pr_Sv:T}
T(t)=\eta\exp{((\sigma-\lambda\nu\delta) t)},\hspace{0.5cm}t\geq 0\text{.}
\end{equation}
\item Function $F$ defined by:
\begin{equation}\label{pr_Sv:FF}
F(V,t)=f_1(t)+f_2(t)V,\hspace{0.5cm}V\in\mathbb{R},\,t>0\text{,}
\end{equation}
for some $f_1, f_2\in L^1_{loc}(\mathbb{R}^+)$,
then the function $T$ is given by:
\begin{equation}\label{pr_Sv:TT}
T(t)=g_1(t)\exp{(g_2(t))},\hspace{0.5cm}t\geq 0\text{,}
\end{equation}
where functions $g_1$ and $g_2$ are defined by:
\begin{equation}
g_1(t)=\eta-\lambda\displaystyle\int_0^tf_1(\tau)\exp{\left(\lambda\delta\displaystyle\int_0^\tau f_2(\xi)d\xi-\sigma \tau\right)}
d\tau,\hspace{0.5cm}t\geq 0
\end{equation}
\begin{equation}
g_2(t)=\sigma t-\lambda\delta\displaystyle\int_0^tf_2(\tau)d\tau,\hspace{0.5cm}t\geq 0\text{.}
\end{equation}
\item Function $F$ defined by:
\begin{equation}\label{pr_Sv:FFF}
F(V,t)=V^nf(t),\hspace{0.5cm}V\in\mathbb{R},\,t>0\text{,}
\end{equation}
for some $n<1$ and some positive function $f\in L^1_{\text{loc}}(\mathbb{R}^+)$, and $\lambda$, $\delta$ and $\eta$ positive numbers, then the function $T$ is given by:
\begin{equation}\label{pr_Sv:TTT}
T(t)=g(t)\exp{(\sigma t)},\hspace{0.5cm}t\geq 0\text{,}
\end{equation}
where the function $g$ is defined by:
\begin{equation}
g(t)=\left(\eta^{1-n}+\lambda\delta^n(n-1)\displaystyle\int_0^tf(\tau)\exp{(\sigma(n-1) \tau)}d\tau\right)^{\frac{1}{1-n}},\hspace{0.5cm}t\geq 0\text{.}
\end{equation}
\end{enumerate}
\end{proposition}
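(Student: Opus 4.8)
The plan is to exploit Theorem~\ref{th:Sv}: once $\Phi$, $h$ and $F$ are chosen as in (\ref{th_Sv:h-Phi-F})--(\ref{th_Sv:X}), producing a separated solution of Problem~P amounts to solving the scalar initial value problem (\ref{th_Sv:T})--(\ref{th_Sv:TT}). Accordingly, for each of the three prescriptions (\ref{pr_Sv:F}), (\ref{pr_Sv:FF}) and (\ref{pr_Sv:FFF}) I would substitute the corresponding $F(\delta T(t),t)$ into (\ref{th_Sv:T}) and recognise the resulting ordinary differential equation as one of three classical integrable types, integrate it, and impose $T(0)=\eta$. (Alternatively each closed form could be validated a posteriori by differentiation; I prefer to derive it, since the derivation also exposes where the hypotheses are used.)

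For the first case, inserting $F(V,t)=\nu V$ gives $F(\delta T,t)=\nu\delta T$, so (\ref{th_Sv:T}) collapses to the autonomous linear homogeneous equation $\dot T=(\sigma-\lambda\nu\delta)T$, whose solution with $T(0)=\eta$ is immediately (\ref{pr_Sv:T}). For the second case, the affine choice (\ref{pr_Sv:FF}) turns (\ref{th_Sv:T}) into the linear first order equation
\begin{equation*}
\dot T(t)+(\lambda\delta f_2(t)-\sigma)T(t)=-\lambda f_1(t),
\end{equation*}
which I would integrate with the integrating factor $\exp(-g_2(t))=\exp(\lambda\delta\int_0^t f_2-\sigma t)$; integrating $(\,\exp(-g_2)T\,)'=-\lambda f_1\exp(-g_2)$ from $0$ to $t$ and using $T(0)=\eta$ yields exactly $T=g_1\exp(g_2)$ as in (\ref{pr_Sv:TT}). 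The local integrability $f_1,f_2\in L^1_{\mathrm{loc}}(\mathbb{R}^+)$ is what makes these integrals, hence $g_1$ and $g_2$, well defined.

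The third case is the one requiring genuine care. With (\ref{pr_Sv:FFF}) equation (\ref{th_Sv:T}) becomes the Bernoulli equation $\dot T-\sigma T=-\lambda\delta^n f(t)T^n$. The key step is the substitution $w=T^{1-n}$, so that $\dot w=(1-n)T^{-n}\dot T$ and the equation linearises to $\dot w-(1-n)\sigma w=-(1-n)\lambda\delta^n f(t)$; solving this linear equation by an integrating factor and returning via $T=w^{1/(1-n)}$ produces (\ref{pr_Sv:TTT}). The obstacle here is not the algebra but the admissibility of the substitution: raising $T$ to the non-integer power $n$ (and later to $1/(1-n)$) requires $T(t)>0$, and this is precisely why the positivity hypotheses $\lambda,\delta,\eta>0$, $f>0$ and $n<1$ are imposed. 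I would therefore check that the bracket $\eta^{1-n}+\lambda\delta^n(n-1)\int_0^t f(\tau)\exp(\sigma(n-1)\tau)\,d\tau$ defining $g^{1-n}$ stays positive, so that $T$ remains positive and the formula is valid on the interval of existence; since $n-1<0$ this bracket is decreasing, so in general the solution is guaranteed only locally in $t$, a point worth recording.
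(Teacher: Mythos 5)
Your proposal is correct and takes essentially the same route as the paper, whose entire proof is the sentence ``It follows by the application of the integrating factor method to the initial value problem (\ref{th_Sv:T})--(\ref{th_Sv:TT})''; your case-by-case treatment (autonomous linear, linear with integrating factor $\exp(-g_2)$, and Bernoulli via $w=T^{1-n}$ followed by an integrating factor) is precisely that method with the computations made explicit. Your closing caveat for case 3 --- that the bracket defining $g(t)^{1-n}$ is decreasing and may vanish in finite time, so positivity of $T$ and hence validity of the formula is in general only guaranteed locally in $t$ --- is a genuine refinement that the paper's statement ``$t\geq 0$'' silently glosses over.
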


\begin{proof}
It follows by the application of the integrating factor method to the initial value problem (\ref{th_Sv:T})-(\ref{th_Sv:TT}).
\end{proof}

\subsection{Explicit solutions obtained from an integral representation}\label{ss:Ir}

Our next theorem is a restatement of Theorem 1 in \cite{TaVi1998} for a particular choice of the function $F$ in Problem P.

\begin{theorem}\label{th:Ir}
Let:
\begin{enumerate}
\item $h$ a continuously differentiable function in $\mathbb{R}^+$ such that $h(0)$ exists and there exist positive numbers $\epsilon$, $c_0$ and $c_1$ such that:
\begin{equation}\label{th_Ir:h}
|h(x)|\leq c_0\exp{(c_1x^{2-\epsilon})},\hspace{0.5cm}\forall\,x>0\text{,}
\end{equation}
\item $\Phi$ a locally H$\ddot{o}$lder continuous function
\end{enumerate}
and
\begin{enumerate}[resume]
\item $F$ the function defined by:
\begin{equation}\label{th_Ir:F}
F=F(V,t)=\nu V,\hspace{0.5cm} V\in\mathbb{R},\,t>0\text{,}
\end{equation}
for some $\nu>0$.
\end{enumerate}
If there exists a negative monotone decreasing function $f=f(t)$, defined for $t>0$, such that:
\begin{equation}\label{th_Ir:f}
\displaystyle\int_{t_1}^{t_2}R(t_2-\tau)d\tau\geq f(t_2-t_1),\hspace{0.5cm}\forall\,0<t_1<t_2\text{,}
\end{equation}
where $R$ is defined in function of $\Phi$ by (\ref{th_Ir:R}) (see below), and
\begin{equation}
\displaystyle\lim_{t\to 0^+}f(t)=0\text{,}
\end{equation}
then the function $u$ defined by:
\begin{multline}\label{th_Ir:u}
u(x,t)=\displaystyle\int_0^{+\infty}G(x,t,\xi,0)h(\xi)d\xi-\nu\displaystyle\int_0^t\left(\displaystyle\int_0^{+\infty}G(x,t,\xi,\tau)\Phi(\xi)d\xi\right)V(\tau)d\tau,\\x\geq 0,\,t\geq 0
\end{multline}
is a solution to Problem P, where $G$ is the Green function:
\begin{equation}\label{th_Ir:G}
G(x,t,\xi,\tau)=K(x,t,\xi,\tau)-K(-x,t,\xi,\tau),\hspace{0.5cm} 0<x,\,0<\xi,\,0<\tau<t\text{,}
\end{equation}
being $K$ the fundamental solution of the one-dimensional heat equation:
\begin{equation}\label{th_Ir:K}
K(x,t,\xi,\tau)=\frac{1}{2\sqrt{\pi (t-\tau)}}\exp{(-(x-\xi)^2/4(t-\tau))},\hspace{0.5cm} 0<x,\,0<\xi,\,0<\tau<t\text{,}
\end{equation}
and the function $V$, defined by:
\begin{equation}\label{th_Ir:V}
V(t)=u_x(0,t),\quad t>0\text{,}
\end{equation}
satisfies the Volterra integral equation:
\begin{equation}\label{eq:V}
V(t)=V_0(t)-\nu\displaystyle\int_0^tR(t-\tau)V(\tau)d\tau,\hspace{0.5cm} t>0\text{,}
\end{equation}
where
\begin{equation}\label{th_Ir:V0}
V_0(t)=\frac{1}{\sqrt{\pi t}}\displaystyle\int_0^{+\infty}\exp{(-\xi^2/4t)}h'(\xi)d\xi,\hspace{0.5cm} t>0\text{,}
\end{equation}
and
\begin{equation}\label{th_Ir:R}
R(t)=\frac{1}{2\sqrt{\pi}t^{3/2}}\displaystyle\int_0^{+\infty}\xi\exp{(-\xi^2/4t)}\Phi(\xi)d\xi,\hspace{0.5cm} t>0\text{.}
\end{equation}
\end{theorem}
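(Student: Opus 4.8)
The plan is to reduce Problem P, with $F(V,t)=\nu V$, to a linear inhomogeneous heat problem in which the boundary heat flux $V(t)=u_x(0,t)$ enters as an a priori unknown parameter, and then to close the system by imposing self-consistency, which produces the Volterra equation (\ref{eq:V}).

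First I would regard $V=V(t)$ as a given function and treat the term $-\nu\Phi(x)V(t)$ as a prescribed forcing. Under this viewpoint Problem P becomes a classical inhomogeneous heat equation on the half-line with homogeneous Dirichlet datum at $x=0$ and initial datum $h$. The Green function $G$ in (\ref{th_Ir:G}), obtained from the fundamental solution $K$ by the method of images (the odd reflection $K(x,\cdot)-K(-x,\cdot)$ enforcing $G|_{x=0}=0$), then yields, via Duhamel's principle, exactly the representation (\ref{th_Ir:u}). The growth bound (\ref{th_Ir:h}) on $h$ together with the local H\"older continuity of $\Phi$ guarantee convergence of the two integrals in (\ref{th_Ir:u}) and the required regularity and boundary behaviour of $u$.

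The next step is to impose the self-consistency condition $V(t)=u_x(0,t)$. Differentiating (\ref{th_Ir:u}) in $x$ and evaluating at $x=0$, a direct computation of $G_x(0,t,\xi,\tau)$ gives
\[
G_x(0,t,\xi,\tau)=\frac{\xi}{2\sqrt{\pi}(t-\tau)^{3/2}}\exp{\left(-\xi^2/4(t-\tau)\right)},
\]
so that the $x$-flux of the source integral reproduces precisely the kernel $R(t-\tau)$ of (\ref{th_Ir:R}). For the initial-datum integral the same computation produces a kernel acting on $h(\xi)$; I would integrate by parts in $\xi$, using the compatibility condition (\ref{cc:h}) (so that $h(0)=0$ annihilates the boundary term at $\xi=0$) and the growth bound (\ref{th_Ir:h}) (so that the boundary term at $\xi=+\infty$ vanishes), thereby rewriting this term as $V_0(t)$ in (\ref{th_Ir:V0}). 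Collecting both contributions gives the Volterra integral equation (\ref{eq:V}).

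It remains to solve (\ref{eq:V}) for $V$. Since this is a linear Volterra equation of the second kind with kernel $\nu R$, I would invoke the cited existence result (Theorem 1 in \cite{TaVi1998}): the hypotheses that there exist a negative, monotone decreasing $f$ with $\int_{t_1}^{t_2}R(t_2-\tau)d\tau\geq f(t_2-t_1)$ and $\lim_{t\to 0^+}f(t)=0$ are exactly what is needed to control the kernel near the diagonal and to secure a unique solution $V$ on $t>0$. Substituting this $V$ back into (\ref{th_Ir:u}) produces the asserted solution of Problem P. The main obstacle is precisely this last step: establishing well-posedness of the Volterra equation in the presence of the weak (integrable) singularity of $R$ as $\tau\to t$, which is where the structural hypothesis (\ref{th_Ir:f}) on $f$ is essential; by comparison, the preceding Green-function representation and the integration-by-parts reduction to $V_0$ are routine.
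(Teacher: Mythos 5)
Your proposal is correct and follows essentially the same route as the paper's own treatment: the paper gives no proof at all, presenting the theorem as a restatement of Theorem 1 in \cite{TaVi1998}, and your argument — Duhamel's representation with the image-method Green function, then self-consistency via $G_x(0,t,\xi,\tau)=\frac{\xi}{2\sqrt{\pi}(t-\tau)^{3/2}}\exp\left(-\xi^2/4(t-\tau)\right)$ and integration by parts using $h(0)=0$ to obtain the Volterra equation (\ref{eq:V}), with solvability resting on the hypothesis (\ref{th_Ir:f}) — is precisely the argument of that cited source. The computations you display (the kernel $R$, the reduction of the initial-datum flux to $V_0$) check out.
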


\begin{remark}
The interest of the previous theorem is that it enable us to finding an explicit solution $u=u(x,t)$ to Problem P by finding the corresponding heat flux $u_x(0,t)$ at the boundary $x=0$ as a solution of the integral equation (\ref{eq:V}).
\end{remark}

The remainder of this section will be devoted to the study of Problem P when:
\begin{enumerate}
\item $F$ is given as in (\ref{th_Ir:F}),
\item $h$ is defined by:
\begin{equation}\label{def:h}
h(x)=\eta x^m,\hspace{0.5cm}x>0\text{,}
\end{equation}
for some $\eta\in\mathbb{R}-\{0\}$ and $m\geq 1$,
\end{enumerate}
and
\begin{enumerate}[resume]
\item $\Phi$ is given by one of the following expressions:
\begin{equation}\label{def:phi123}
\varphi_1(x)=\lambda x,\hspace{0.5cm}\varphi_2(x)=-\mu\sinh{(\lambda x)}\hspace{0.5cm}\text{or}\hspace{0.5cm}\varphi_3(x)=-\mu\sin{(\lambda x)},\hspace{0.5cm}x>0\text{,}
\end{equation}
for some $\lambda>0$ and $\mu>0$.
\end{enumerate}

It is easy to check that for this choice of functions $F$, $h$ and $\Phi$, Problem P is under the hypothesis of the previous theorem (see Appendix \ref{app:1}). Therefore, it has the solution $u=u(x,t)$ given in (\ref{th_Ir:u}).

\begin{proposition}\label{pr:IrV1}
If $F$, $h$ and $\Phi=\varphi_1$ are defined as in (\ref{th_Ir:F}), (\ref{def:h}) and (\ref{def:phi123}), then the heat flux at the boundary $x=0$ corresponding to the solution $u$ (see (\ref{th_Ir:u})) to Problem P is given by:
\begin{equation}\label{pr_IrV1:ux}
u_x(0,t)=\left\{\begin{array}{ccc}
                 \eta\exp{(-\nu\lambda t)}& &\text{if }m=1\\
                 \frac{c(m-1)}{2}\exp{(-\nu\lambda t)}\displaystyle\int_0^t\tau^{(m-3)/2}\exp{(\nu\lambda\tau)}d\tau& &\text{if }m>1
                \end{array}\right.,\hspace{0.5cm}t>0\text{.}
\end{equation}
where
\begin{equation}\label{pr_IrV1:c}
c=\frac{2^{m-1}m\eta}{\sqrt{\pi}}\Gamma\left(\frac{m}{2}\right)\text{,}
\end{equation}
and $\Gamma$ is the Gamma function, defined by:
\begin{equation}
\Gamma(z)=\displaystyle\int_0^{+\infty}\xi^{z-1}\exp{(-\xi)}d\xi,\hspace{0.5cm}z\in\mathbb{R}\text{.}
\end{equation}
\end{proposition}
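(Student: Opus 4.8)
The plan is to invoke Theorem \ref{th:Ir}, which guarantees that the sought heat flux $u_x(0,t)$ coincides with the solution $V$ of the Volterra integral equation (\ref{eq:V}), and then to make that equation fully explicit for the present data $h(x)=\eta x^m$ and $\Phi=\varphi_1(x)=\lambda x$. Accordingly the argument splits into two preliminary evaluations—the forcing term $V_0$ of (\ref{th_Ir:V0}) and the kernel $R$ of (\ref{th_Ir:R})—followed by the resolution of (\ref{eq:V}) itself.

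First I would compute $V_0$. Since $h'(\xi)=\eta m\xi^{m-1}$, formula (\ref{th_Ir:V0}) reduces to the Gaussian moment integral $\int_0^{+\infty}\xi^{m-1}\exp(-\xi^2/4t)\,d\xi$. The substitution $s=\xi^2/(4t)$ converts this into a Gamma integral and yields $\int_0^{+\infty}\xi^{m-1}\exp(-\xi^2/4t)\,d\xi=2^{m-1}t^{m/2}\Gamma(m/2)$, whence $V_0(t)=c\,t^{(m-1)/2}$ with $c$ exactly the constant defined in (\ref{pr_IrV1:c}). Applying the same substitution to $R$ in (\ref{th_Ir:R}) with $\Phi(\xi)=\lambda\xi$ requires the second moment $\int_0^{+\infty}\xi^2\exp(-\xi^2/4t)\,d\xi=2\sqrt{\pi}\,t^{3/2}$, which cancels the prefactor $t^{-3/2}$ entirely and leaves the constant kernel $R(t)=\lambda$.

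The constancy of $R$ is the key structural feature of this case. The convolution in (\ref{eq:V}) collapses to $\lambda\int_0^t V(\tau)\,d\tau$, so (\ref{eq:V}) becomes $V(t)=c\,t^{(m-1)/2}-\nu\lambda\int_0^t V(\tau)\,d\tau$. Differentiating in $t$ eliminates the integral and produces the linear first-order ordinary differential equation $V'(t)+\nu\lambda V(t)=\tfrac{c(m-1)}{2}\,t^{(m-3)/2}$, whose initial value is read off the integral equation at $t=0$: one gets $V(0)=\eta$ when $m=1$ (using $\Gamma(1/2)=\sqrt{\pi}$, so that $c=\eta$) and $V(0)=0$ when $m>1$. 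For $m=1$ the right-hand side vanishes and the equation integrates at once to $V(t)=\eta\exp(-\nu\lambda t)$; for $m>1$ the integrating factor $\exp(\nu\lambda t)$ gives $V(t)=\tfrac{c(m-1)}{2}\exp(-\nu\lambda t)\int_0^t\tau^{(m-3)/2}\exp(\nu\lambda\tau)\,d\tau$. These are precisely the two branches of (\ref{pr_IrV1:ux}).

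I expect the obstacles to be matters of bookkeeping rather than of substance. The two points worth checking carefully are that the differentiation of the Volterra equation is legitimate and that the resulting integral $\int_0^t\tau^{(m-3)/2}\exp(\nu\lambda\tau)\,d\tau$ converges at $\tau=0$; since the exponent obeys $(m-3)/2>-1$ exactly when $m>1$, the integral is finite on the relevant range and the value $V(0)=0$ is consistent with the differential equation. The remaining care lies in tracking the Gamma-function constants so that the prefactor emerging from $V_0$ matches the definition (\ref{pr_IrV1:c}) of $c$.
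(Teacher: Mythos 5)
Your proof is correct, and while it shares the paper's skeleton (invoke Theorem \ref{th:Ir}, compute $V_0$ and $R$, then solve the Volterra equation (\ref{eq:V})), you carry out the key step by a genuinely different method. The paper appeals to the resolvent representation for convolution Volterra equations from \cite{Mi1971}: $V$ is expressed through an auxiliary resolvent $r$ solving $r(t)=1-\nu\lambda\int_0^t r(\tau)\,d\tau$, whose solution $r(t)=\exp(-\nu\lambda t)$ is then inserted into (\ref{pr_IrV1:V}). You instead exploit the constancy of the kernel $R\equiv\lambda$ to differentiate (\ref{eq:V}) directly, turning it into the linear ODE $V'(t)+\nu\lambda V(t)=\tfrac{c(m-1)}{2}\,t^{(m-3)/2}$ with initial value $V(0)=\eta$ (for $m=1$, where indeed $c=\eta$) or $V(0)=0$ (for $m>1$), solved by an integrating factor. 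Your route buys elementarity and self-containedness: no citation of resolvent theory is needed; the only delicate points (legitimacy of the differentiation for $t>0$, and integrability of $\tau^{(m-3)/2}$ at $\tau=0$, which holds exactly when $m>1$) are identified and settled; and you make explicit the computations $V_0(t)=c\,t^{(m-1)/2}$ and $R\equiv\lambda$ that the paper states without proof or defers to Appendix \ref{app:1}. It also sidesteps a slip in the paper's intermediate formula (\ref{pr_IrV1:V}), which as written has $r(\tau)$ where the resolvent representation requires the convolution $r(t-\tau)$ --- only the convolution form reproduces (\ref{pr_IrV1:ux}) --- so your independent derivation confirms that the final expression (\ref{pr_IrV1:ux}) is the correct one. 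What the paper's resolvent method buys in exchange is a template that extends to non-constant kernels, which is exactly how the analogous results for $\Phi=\varphi_2$ and $\Phi=\varphi_3$ (Propositions \ref{pr:IrV2} and \ref{pr:IrV3}) are later obtained via Laplace transforms.
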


\begin{proof}
We know from Theorem \ref{th:Ir} that $u_x(0,t)=V(t)$ satisfies the Volterra integral equation (\ref{eq:V}), where the function $V_0$ is given by:
\begin{equation}
V_0(t)=ct^{(m-1)/2},\hspace{0.5cm}t>0\text{.}
\end{equation}
Then, $V(t)$ is given by (see \cite{Mi1971}):
\begin{equation}\label{pr_IrV1:V}
V(t)=\frac{c(m-1)}{2}\displaystyle\int_0^t\tau^{(m-3)/2}r(\tau)d\tau,\hspace{0.5cm}t>0\text{,}
\end{equation}
where $r$ satisfies the integral equation:
\begin{equation}\label{pr_IrV1:r}
r(t)=1-\nu\lambda\displaystyle\int_0^tr(\tau)d\tau,\hspace{0.5cm}t>0\text{,}
\end{equation}
whose solution is given by:
\begin{equation}\label{pr_IrV1:rr}
r(t)=\exp(-\nu\lambda t),\hspace{0.5cm}t>0\text{.}
\end{equation}
By replacing (\ref{pr_IrV1:rr}) in (\ref{pr_IrV1:V}), we obtain (\ref{pr_IrV1:ux}).
\end{proof}

\begin{corollary}\label{co:IrV1}
If in Proposition \ref{pr:IrV1} we consider $m$ an odd number given by $m=2p+1$ with $p\in\mathbb{N}$, then we have:
\begin{equation}\label{co_IrV1:ux}
u_x(0,t)=p_{1,m}(t)-c_{1,m}\exp{(-\nu\lambda t)},\hspace{0.5cm}t>0\text{,}
\end{equation}
where $c_{1,m}$ is given by:
\begin{equation}\label{co_IrV1:c_1m}
c_{1,m}=(-1)^{p-1}\frac{cp!}{(\nu\lambda)^p}\text{,}
\end{equation}
being $c$ the constant given in (\ref{pr_IrV1:c}), and $p_{1,m}(x)$ is the polynomial defined by:
\begin{equation}\label{co_IrV1:p_1m}
p_{1,m}(t)=\left\{\begin{array}{ccc}
                 c_{1,3}& &\text{if }m=3\\
                 -c_{1,5}(\nu\lambda t-1)& &\text{if }m=5\\
				 c_{1,m}\left(\displaystyle\sum_{k=1}^{p-1}\frac{(-\nu\lambda)^k}{k!}t^k+(-1)^{p-1}\right)& &\text{if }m\geq 7
                \end{array}\right.,\hspace{0.5cm}t>0\text{.}
\end{equation}
\end{corollary}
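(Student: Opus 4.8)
The plan is to start from the explicit formula for the heat flux established in Proposition~\ref{pr:IrV1} in the case $m>1$, namely
\begin{equation*}
u_x(0,t)=\frac{c(m-1)}{2}\exp{(-\nu\lambda t)}\displaystyle\int_0^t\tau^{(m-3)/2}\exp{(\nu\lambda\tau)}d\tau,
\end{equation*}
and to specialize it to $m=2p+1$ with $p\in\mathbb{N}$. First I would substitute $m=2p+1$ so that the exponent becomes $(m-3)/2=p-1$, which is a non-negative integer; this is the structural reason the odd case simplifies, because the integrand $\tau^{p-1}\exp{(\nu\lambda\tau)}$ is now the product of an integer power of $\tau$ and an exponential, which admits a closed-form antiderivative. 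The prefactor becomes $\tfrac{c(m-1)}{2}=cp$, so the task reduces to evaluating $cp\exp(-\nu\lambda t)\int_0^t\tau^{p-1}\exp(\nu\lambda\tau)\,d\tau$ in closed form.

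The key step is to integrate $\int_0^t\tau^{p-1}\exp(\nu\lambda\tau)\,d\tau$ exactly. I would do this by repeated integration by parts (equivalently, by invoking the standard reduction formula for $\int\tau^{k}e^{a\tau}d\tau$), which produces a finite sum: for the indefinite integral one gets $\exp(\nu\lambda\tau)$ times a polynomial in $\tau$ of degree $p-1$ with coefficients involving the falling factorials of $p-1$ and inverse powers of $\nu\lambda$, plus a constant term coming from evaluation at the lower limit $\tau=0$. Concretely, after multiplying back by $\exp(-\nu\lambda t)$, the term $\exp(\nu\lambda t)$ from the upper limit cancels and leaves a genuine polynomial $p_{1,m}(t)$, while the contribution from the lower limit $\tau=0$ survives as a single term proportional to $\exp(-\nu\lambda t)$, which becomes the $-c_{1,m}\exp(-\nu\lambda t)$ summand in \eqref{co_IrV1:ux}. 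I would then read off $c_{1,m}$ from that boundary term: evaluating the polynomial coefficient at $\tau=0$ gives the constant $(-1)^{p-1}\tfrac{(p-1)!}{(\nu\lambda)^{p-1}}$, and multiplying by the prefactor $cp$ reproduces $c_{1,m}=(-1)^{p-1}\tfrac{cp!}{(\nu\lambda)^p}$ as stated in \eqref{co_IrV1:c_1m}, after absorbing one factor of $\nu\lambda$ that I would track carefully.

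Finally I would verify the three-way case split in \eqref{co_IrV1:p_1m} by matching degrees: for $p=1$ (i.e.\ $m=3$) the integral is elementary and $p_{1,3}$ collapses to the constant $c_{1,3}$; for $p=2$ (i.e.\ $m=5$) the polynomial is linear, giving $-c_{1,5}(\nu\lambda t-1)$; and for $p\geq 3$ (i.e.\ $m\geq 7$) the general summation formula applies, which I would write in the indexed form $c_{1,m}\bigl(\sum_{k=1}^{p-1}\tfrac{(-\nu\lambda)^k}{k!}t^k+(-1)^{p-1}\bigr)$ and check against the integration-by-parts output term by term. The main obstacle I anticipate is purely bookkeeping rather than conceptual: correctly tracking the alternating signs, the factorial and power-of-$\nu\lambda$ factors through the iterated integration by parts, and reconciling the normalization so that the stated closed forms for $c_{1,m}$ and $p_{1,m}$ agree exactly; a clean way to avoid sign errors is to verify the recursion for small $p$ and then argue the general pattern by induction on $p$, which also cleanly justifies the boundary between the $m=5$ and $m\geq 7$ cases.
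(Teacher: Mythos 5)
Your method is precisely the paper's: the paper's own proof consists of substituting $m=2p+1$ into (\ref{pr_IrV1:ux}) and evaluating $\int_0^t\tau^{p-1}\exp(\nu\lambda\tau)\,d\tau$ in closed form via the identity (\ref{co_IrV1:int}), which is exactly your iterated integration by parts. Your identification of the exponential term as the contribution of the lower limit $\tau=0$, and of the constant $c_{1,m}$ in (\ref{co_IrV1:c_1m}), is correct (modulo the harmless slip that the lower-limit constant of the antiderivative is $(-1)^{p-1}(p-1)!/(\nu\lambda)^{p}$, not $/(\nu\lambda)^{p-1}$).

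The genuine problem is your last step, where you assert that the integration-by-parts output matches (\ref{co_IrV1:p_1m}) ``term by term'' for $m\geq7$: it does not when $p$ is even. Writing $a=\nu\lambda$, your computation actually gives
\begin{equation*}
u_x(0,t)=cp\,e^{-at}\int_0^t\tau^{p-1}e^{a\tau}\,d\tau
=c_{1,m}\left(\sum_{k=1}^{p-1}\frac{(-a)^k}{k!}\,t^k+1\right)-c_{1,m}\,e^{-at},
\end{equation*}
so the constant term of $p_{1,m}$ is $c_{1,m}$, not $c_{1,m}(-1)^{p-1}$; the two coincide only for odd $p$ (e.g.\ $m=7$). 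For $p=4$ ($m=9$) direct integration yields
\begin{equation*}
u_x(0,t)=\frac{4c}{a}t^3-\frac{12c}{a^2}t^2+\frac{24c}{a^3}t-\frac{24c}{a^4}+\frac{24c}{a^4}\,e^{-at},
\end{equation*}
whose constant term is $-24c/a^4=c_{1,9}$, whereas (\ref{co_IrV1:p_1m}) prescribes $c_{1,9}(-1)^{3}=+24c/a^4$. Note that the constant term $c_{1,m}$ is also what the paper's own $m=3$ and $m=5$ formulas give ($p_{1,3}=c_{1,3}$ and $p_{1,5}(0)=c_{1,5}$), so the $(-1)^{p-1}$ in the $m\geq7$ case of (\ref{co_IrV1:p_1m}) is evidently a sign error that should read $1$. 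Consequently, a careful execution of your plan proves a corrected statement rather than the stated one; the flaw in your proposal is precisely the claim that the stated form survives a term-by-term check. Your proposed safeguard of verifying small cases would only expose this at $p=4$, since $p=3$ is the first case covered by the general formula and there the two expressions happen to agree.
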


\begin{proof}
It follows by solving the integral in the expression of $u_x(0,t)$ given in (\ref{pr_IrV1:ux}). We do not reproduce these calculations here, but only remark the utility of the identity:
\begin{multline}\label{co_IrV1:int}
\displaystyle\int_0^t\tau^n\exp{(a\tau)}d\tau=
\frac{n!}{a}\exp{(at)}\left(\displaystyle\sum_{k=0}^{n-1}\frac{(-1)^kt^{n-k}}{(n-k)!a^k}+\frac{(-1)^n}{a^n}+\frac{(-1)^{n+1}}{a^n}\exp{(-at)}\right),\\t>0,\,n\in\mathbb{N},\,n\geq 2,\,a\in\mathbb{R}
\end{multline}
when $m\geq 7$.
\end{proof}

Last corollary enables us to obtain the asymptotic behaviour of the heat flux $u_x(0,t)$ at the face $x=0$ when $t$ tends to $+\infty$, for an odd number $m$. Next result is related to this topic. We do not reproduce here the computations involved in its proof, which follows by taking the limit when $t$ tends to $+\infty$ in the expression of $u_x(0,t)$ given in Corollary \ref{co:IrV1}.

\begin{corollary}\label{co:IrVinf1}
If $F$, $h$ and $\Phi=\varphi_1$ are defined as in (\ref{th_Ir:F}), (\ref{def:h}) and (\ref{def:phi123}), where $m$ is an odd number, and $u$ is the solution to Problem P, given in (\ref{th_Ir:u}), then:
\begin{enumerate}
\item if $m=1$, we have:
\begin{equation}
\displaystyle\lim_{t\to+\infty}u_x(0,t)=0\text{,}
\end{equation}
\item if $m=3$, we have:
\begin{equation}
\displaystyle\lim_{t\to+\infty}u_x(0,t)=\frac{6\eta}{\nu\lambda}\text{,}
\end{equation}
\item if $m\geq 5$, we have:
\begin{equation}
\displaystyle\lim_{t\to+\infty}u_x(0,t)=\left\{\begin{array}{ccc}
                                            -\infty& &\text{if }\eta<0\\
                                            +\infty& &\text{if }\eta>0
                                               \end{array}\right.\text{.}
\end{equation}
\end{enumerate}
\end{corollary}

The main idea in the proof of Proposition \ref{pr:IrV1} was to find a solution for the integral equation (\ref{eq:V}) by finding a solution of another integral equation, which was easier to solve. In a more general way, we know that if $V$ satisfies the Volterra integral equation (\ref{eq:V}), with $V_0$ an infinitely differentiable function, then $V(t)$ can be written as (see \cite{Mi1971}):
\begin{equation}\label{re_Ir:V}
V(t)=V_0(0)r(t)+\displaystyle\int_0^tV_0'(t-\tau)r(\tau)d\tau,\hspace{0.5cm}t>0\text{,}
\end{equation}
where $r$ satisfies the integral equation:
\begin{equation}\label{re_Ir:r}
r(t)=1-\nu\displaystyle\int_0^tR(t-\tau)r(\tau)d\tau,\hspace{0.5cm}t>0\text{,}
\end{equation}
and $R$ is given in (\ref{th_Ir:R}). But this last integral equation is not always easy to solve. Nevertheless, in several cases we can find an explicit solution for the equation (\ref{re_Ir:r}) by a formal application of the Laplace transform to their both sides. This is the way which led us to the expressions of $u_x(0,t)$ when $\Phi=\varphi_2$ or $\Phi=\varphi_3$, given in Propositions \ref{pr:IrV2} and \ref{pr:IrV3}.

\begin{proposition}\label{pr:IrV2}
Let $F$, $h$ and $\Phi=\varphi_2$ defined as in (\ref{th_Ir:F}), (\ref{def:h}) and (\ref{def:phi123}), and $\sigma=\lambda+\nu\mu$. Then the heat flux at the boundary $x=0$ corresponding to the solution $u$ (see (\ref{th_Ir:u})) of the Problem P is given by:
\begin{enumerate}
\item If $\sigma\neq 0$ then:
{\small \begin{equation}\label{pr_IrV2:ux}
u_x(0,t)=\left\{\begin{array}{ccc}
                 \frac{\eta}{\sigma}\left(\lambda+\nu\mu\exp{(\lambda\sigma t)}\right)& &\text{if }m=1\\
                 \frac{c\lambda}{\sigma}t^{(m-1)/2}+\frac{c(m-1)\nu\mu}{2\sigma}\exp{(\lambda\sigma t)}\displaystyle\int_0^t\tau^{(m-3)/2}\exp{(-\lambda\sigma\tau)}d\tau& &\text{if }m>1
                \end{array}\right.,\hspace{0.25cm}t>0\text{.}
\end{equation}}
\item If $\sigma=0$ then:
\begin{equation}\label{pr_IrV2:Ux}
u_x(0,t)=\left\{\begin{array}{ccc}
                 \eta(1-\lambda^2t)& &\text{if }m=1\\
                 c t^{(m-1)/2}-\frac{2c\lambda^2}{m+1}t^{(m+1)/2}& &\text{if }m>1
                \end{array}\right.,\hspace{0.5cm}t>0\text{.}
\end{equation}
\end{enumerate}
\end{proposition}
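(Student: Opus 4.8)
The plan is to specialize the resolvent machinery described just before the statement to the choice $\Phi=\varphi_2$. By Theorem~\ref{th:Ir} the heat flux $V(t)=u_x(0,t)$ is the solution of the Volterra integral equation (\ref{eq:V}) with forcing $V_0$ from (\ref{th_Ir:V0}) and kernel $R$ from (\ref{th_Ir:R}). Since $h(x)=\eta x^m$ is unchanged from Proposition~\ref{pr:IrV1}, the forcing is again $V_0(t)=c\,t^{(m-1)/2}$ with $c$ as in (\ref{pr_IrV1:c}), so only the kernel $R$ has to be recomputed for the new $\Phi$.

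First I would evaluate $R$. Substituting $\varphi_2(x)=-\mu\sinh(\lambda x)$ into (\ref{th_Ir:R}) reduces matters to the Gaussian integral $\int_0^{+\infty}\xi e^{-\xi^2/4t}\sinh(\lambda\xi)\,d\xi$, which I would obtain by differentiating the standard identity $\int_0^{+\infty}e^{-\xi^2/4t}\cosh(\lambda\xi)\,d\xi=\sqrt{\pi t}\,e^{\lambda^2 t}$ with respect to $\lambda$. This gives the clean closed form $R(t)=-\mu\lambda e^{\lambda^2 t}$, so that the kernel $-\nu R$ appearing in (\ref{eq:V}) is a pure exponential $\nu\mu\lambda e^{\lambda^2 t}$.

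Next I would solve the resolvent equation (\ref{re_Ir:r}) for $r$. Because the kernel is exponential, I would apply the Laplace transform to both sides, as suggested in the text: using $\widehat{R}(p)=-\mu\lambda/(p-\lambda^2)$ yields $\widehat{r}(p)=(p-\lambda^2)/[p(p-\lambda\sigma)]$ with $\sigma=\lambda+\nu\mu$. A partial-fraction decomposition then inverts to $r(t)=\sigma^{-1}(\lambda+\nu\mu e^{\lambda\sigma t})$ when $\sigma\neq 0$, while the double pole at the origin when $\sigma=0$ produces instead $r(t)=1-\lambda^2 t$; this is exactly the origin of the two-case split in the statement. Equivalently, differentiating (\ref{re_Ir:r}) converts it into the linear ODE $r'(t)=\lambda\sigma\, r(t)-\lambda^2$, $r(0)=1$, whose solution degenerates in the same way.

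Finally I would assemble $V$ through the representation (\ref{re_Ir:V}). For $m=1$ the forcing $V_0\equiv\eta$ is constant, so $V_0'\equiv 0$ and $V(t)=\eta\,r(t)$, giving the first lines of (\ref{pr_IrV2:ux}) and (\ref{pr_IrV2:Ux}) at once. For $m>1$ one has $V_0(0)=0$ and $V_0'(t)=\tfrac{c(m-1)}2 t^{(m-3)/2}$, so $V(t)=\tfrac{c(m-1)}2\int_0^t(t-\tau)^{(m-3)/2}r(\tau)\,d\tau$; substituting the two forms of $r$, shifting $\tau\mapsto t-\tau$ in the exponential part, and evaluating the elementary monomial integrals $\int_0^t s^{(m-3)/2}\,ds$ and (for $\sigma=0$) $\int_0^t s^{(m-1)/2}\,ds$ produces the remaining lines of (\ref{pr_IrV2:ux}) and (\ref{pr_IrV2:Ux}). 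I expect the main obstacle to be the kernel computation together with the rigorous justification of the formal Laplace-transform step (convergence of $\widehat{R}$ requires $p>\lambda^2$, and one must confirm that the resulting $r$ indeed solves (\ref{re_Ir:r})); the case $\sigma=0$ also needs separate handling, since both the transform inversion and the ODE particular solution degenerate there.
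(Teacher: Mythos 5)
Your proposal is correct, and every computation in it checks out: $R(t)=-\lambda\mu\exp(\lambda^2t)$ (this value is confirmed in Appendix \ref{app:1} of the paper), $\widehat{r}(p)=(p-\lambda^2)/[p(p-\lambda\sigma)]$, hence $r(t)=\sigma^{-1}\left(\lambda+\nu\mu\exp(\lambda\sigma t)\right)$ for $\sigma\neq0$ and $r(t)=1-\lambda^2t$ for $\sigma=0$, and the convolution $V(t)=\frac{c(m-1)}{2}\int_0^t(t-\tau)^{(m-3)/2}r(\tau)\,d\tau$ reproduces (\ref{pr_IrV2:ux}) and (\ref{pr_IrV2:Ux}) exactly. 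However, your route is not the paper's written proof. The paper proves the proposition by pure verification: it asserts that an easy computation shows the stated expressions satisfy the Volterra equation (\ref{eq:V}), and then identifies them with the heat flux (implicitly using uniqueness of solutions of (\ref{eq:V})). The constructive resolvent-plus-Laplace derivation you propose is precisely what the paper describes, in the text immediately preceding the proposition, as ``the way which led us to the expressions'' --- but the paper deliberately leaves that at the level of a heuristic, since the Laplace step is only formal. Your proposal closes that gap in the right way: the differentiated form $r'(t)=\lambda\sigma r(t)-\lambda^2$, $r(0)=1$, is a rigorous, elementary substitute for the transform, and it also explains structurally why the case $\sigma=0$ splits off (the exponential solution of the ODE degenerates there). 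What each approach buys: the paper's proof is two lines long but presupposes the answer; yours derives it and simultaneously covers both cases. The one point you should still patch is the appeal to (\ref{re_Ir:V}): the paper quotes that representation for infinitely differentiable $V_0$, whereas $V_0(t)=ct^{(m-1)/2}$ is smooth at $t=0$ only when $m$ is an odd integer, and the proposition allows any real $m\geq1$. Either invoke the resolvent representation under the weaker (and sufficient) hypothesis that $V_0$ is absolutely continuous with locally integrable derivative, or --- cheapest of all --- use your derivation as the discovery step and conclude as the paper does, by substituting the resulting expressions back into (\ref{eq:V}).
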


\begin{proof}
An easy computation shows that the expressions given in (\ref{pr_IrV2:ux}) and (\ref{pr_IrV2:Ux}) satisfy the integral equation (\ref{eq:V}). Therefore, they correspond to the heat flux $u_x(0,t)$ at the boundary $x=0$ for the solution $u$ of the Problem P given in (\ref{th_Ir:u}).
\end{proof}

\begin{corollary}\label{co:IrV2}
If in Proposition \ref{pr:IrV2} we consider $\sigma\neq 0$ and $m$ an odd number given by $m=2p+1$ with $p\in\mathbb{N}$, then we have:
\begin{equation}\label{co_IrV2:ux}
u_x(0,t)=p_{2,m}(t)+c_{2,m}\exp{(\lambda\sigma t)},\hspace{0.25cm}t>0\text{,}
\end{equation}
where $c_{2,m}$ is given by:
\begin{equation}\label{co_IrV2:c_2m}
c_{2,m}=\frac{c\nu\mu p!}{\sigma(\lambda\sigma)^p}\text{,}
\end{equation}
being $c$ the constant given in (\ref{pr_IrV1:c}), and $p_{2,m}(x)$ is the polynomial defined by:
\begin{equation}\label{co_IrV2:p_2m}
p_{2,m}(t)=\left\{\begin{array}{ccc}
              c_{2,3}\left(\frac{\lambda^2\sigma}{\nu\mu}t-1\right)& &\text{if }m=3\\
              c_{2,5}\left(\frac{\lambda^3\sigma^2}{2\nu\mu}t^2-\lambda\sigma t-1\right)& &\text{if }m=5\\
			  \frac{c\lambda}{\sigma}t^p-c_{2,m}\left(\displaystyle\sum_{k=1}^{p-1}\frac{(\lambda\sigma)^k}{k!}t^k+1\right)& &\text{if }m\geq 7
                \end{array}\right.,\hspace{0.25cm}t>0\text{.}
\end{equation}
\end{corollary}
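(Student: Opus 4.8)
The plan is to start from the closed form for $u_x(0,t)$ furnished by Proposition \ref{pr:IrV2} in the regime $\sigma\neq 0$, $m>1$, and to perform the one remaining integration explicitly after specializing $m$ to the odd value $m=2p+1$. Under this substitution $(m-1)/2=p$ and $(m-3)/2=p-1$, so the $m>1$ branch of (\ref{pr_IrV2:ux}) becomes
\begin{equation*}
u_x(0,t)=\frac{c\lambda}{\sigma}\,t^{p}+\frac{cp\nu\mu}{\sigma}\exp(\lambda\sigma t)\int_0^t\tau^{p-1}\exp(-\lambda\sigma\tau)\,d\tau,\qquad t>0 .
\end{equation*}
Everything therefore reduces to evaluating the single integral $\int_0^t\tau^{p-1}\exp(-\lambda\sigma\tau)\,d\tau$ and multiplying by the external factor $\exp(\lambda\sigma t)$.

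First I would dispose of the two low-degree cases by hand, since the identity (\ref{co_IrV1:int}) is stated only for exponents at least $2$. For $m=3$ (so $p=1$) the integral is elementary, namely $(1-\exp(-\lambda\sigma t))/(\lambda\sigma)$; for $m=5$ (so $p=2$) a single integration by parts suffices. In either case, multiplying by $\exp(\lambda\sigma t)$ and restoring the term $\frac{c\lambda}{\sigma}t^{p}$ leaves a polynomial in $t$ together with exactly one multiple of $\exp(\lambda\sigma t)$; matching its coefficient against $c_{2,m}=c\nu\mu\,p!/(\sigma(\lambda\sigma)^{p})$ and factoring $c_{2,m}$ out of the remaining polynomial then reproduces the stated $p_{2,3}$ and $p_{2,5}$.

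For $m\geq 7$ (so $p\geq 3$) I would invoke (\ref{co_IrV1:int}) with $n=p-1$ and $a=-\lambda\sigma$. The identity presents the integral as $\exp(-\lambda\sigma t)$ multiplying a polynomial-plus-constant, together with one bare constant coming from the $\exp(-at)$ summand. Multiplication by the external $\exp(\lambda\sigma t)$ collapses the first group into a polynomial and promotes that bare constant to the unique surviving multiple of $\exp(\lambda\sigma t)$, whose coefficient works out to $\frac{cp\nu\mu}{\sigma}\cdot\frac{(p-1)!}{(\lambda\sigma)^{p}}=\frac{c\nu\mu\,p!}{\sigma(\lambda\sigma)^{p}}=c_{2,m}$, exactly as claimed. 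The polynomial part equals $\frac{c\lambda}{\sigma}t^{p}$ minus $c_{2,m}(\lambda\sigma)^{p-1}$ times the polynomial-plus-constant from (\ref{co_IrV1:int}); re-indexing the sum by $j=p-1-k$ turns it into $\sum_{k=1}^{p-1}(\lambda\sigma)^{k}t^{k}/k!$, while the lone constant contributes the additive $1$, yielding precisely the expression for $p_{2,m}$ displayed in (\ref{co_IrV2:p_2m}).

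The only delicate point is the sign bookkeeping. The identity (\ref{co_IrV1:int}) carries alternating factors $(-1)^{k}$ together with inverse powers of $a=-\lambda\sigma$, and one must verify that each $(-1)^{k}$ exactly cancels the sign hidden in $(-\lambda\sigma)^{k}$, so that the final polynomial displays the uniformly positive coefficients $(\lambda\sigma)^{k}/k!$; the same cancellation, applied to $(-1)^{p}/(-\lambda\sigma)^{p}$, is what leaves the clean coefficient $c_{2,m}$. Once these sign cancellations are confirmed the computation is entirely mechanical, and no convergence or regularity issue intervenes, because by Proposition \ref{pr:IrV2} the expression being rearranged is already known to satisfy the Volterra equation (\ref{eq:V}).
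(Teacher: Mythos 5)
Your proposal is correct and follows essentially the same route as the paper: it evaluates the remaining integral in (\ref{pr_IrV2:ux}) for $m=2p+1$ via the identity (\ref{co_IrV1:int}) with $a=-\lambda\sigma$, which is exactly what the paper's one-line proof prescribes. Your separate elementary treatment of $m=3$ and $m=5$ (where (\ref{co_IrV1:int}) does not apply since it requires $n\geq 2$) and the explicit sign cancellation $(-1)^k/(-\lambda\sigma)^k=1/(\lambda\sigma)^k$ are details the paper leaves implicit, and your coefficient $\frac{cp\nu\mu}{\sigma}\cdot\frac{(p-1)!}{(\lambda\sigma)^{p}}=c_{2,m}$ checks out.
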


\begin{proof}
It follows by solving the integral in the expression of $u$ given in  (\ref{pr_IrV2:ux}) and the use of the identity (\ref{co_IrV1:int}).
\end{proof}

\begin{corollary}\label{co:IrVinf2}
Let $F$, $h$ and $\Phi=\varphi_2$ defined as in (\ref{th_Ir:F}), (\ref{def:h}) and (\ref{def:phi123}), with $m$ and odd number, and $\sigma=\lambda+\nu\mu$. If $u$ is the solution of the Problem P, given in (\ref{th_Ir:u}), then:

\begin{enumerate}
\item If $\sigma\neq0$ then:
\begin{enumerate}
\item if $m=1$, we have:
\begin{equation}
\displaystyle\lim_{t\to+\infty}u_x(0,t)=\left\{\begin{array}{ccc}
                                            -\infty &\text{if }\sigma>0,&\eta<0\\
											+\infty &\text{if }\sigma>0,&\eta>0\\
											\frac{\eta\lambda}{\sigma}&\text{if }\sigma<0&
                                           \end{array}\right.\text{,}
\end{equation}
\item if $m\geq 3$, we have:
\begin{equation}
\displaystyle\lim_{t\to+\infty}u_x(0,t)=\left\{\begin{array}{ccc}
                                            -\infty& &\text{if }\sigma\eta<0\\
																						                            +\infty& &\text{if }\sigma\eta>0												   \end{array}\right.\text{.}
\end{equation}
\end{enumerate}
\item If $\sigma=0$ then:
\begin{equation}
\displaystyle\lim_{t\to+\infty}u_x(0,t)=\left\{\begin{array}{ccc}
                                            -\infty& &\text{if }\eta>0\\
											+\infty& &\text{if }\eta<0
												\end{array}\right.\text{.}
\end{equation}
\end{enumerate}
\end{corollary}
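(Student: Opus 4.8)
The plan is to read off each limit directly from the closed-form expressions for $u_x(0,t)$ already obtained, splitting exactly along the sign of $\sigma$ and the size of the odd integer $m$. For $\sigma\neq0$ and $m\geq3$ I would start from the representation $u_x(0,t)=p_{2,m}(t)+c_{2,m}\exp(\lambda\sigma t)$ of Corollary \ref{co:IrV2}, which displays $u_x(0,t)$ as a polynomial plus a single exponential; for $m=1$ (not covered by that polynomial formula, since there $p=0$) I would instead use the first line of (\ref{pr_IrV2:ux}) in Proposition \ref{pr:IrV2}; and for $\sigma=0$ I would use (\ref{pr_IrV2:Ux}) in the same proposition. In every case the limit is governed by a single dominant term, so the whole argument reduces to (i) deciding which term dominates and (ii) computing the sign of its coefficient in terms of the data.

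For the decision in (i) the key remark is that $\lambda>0$, so $\lambda\sigma$ has the sign of $\sigma$. When $\sigma>0$ the factor $\exp(\lambda\sigma t)$ grows faster than any power of $t$, so $u_x(0,t)\sim c_{2,m}\exp(\lambda\sigma t)$ (respectively $\sim\frac{\eta\nu\mu}{\sigma}\exp(\lambda\sigma t)$ when $m=1$), and the sign of the limit is the sign of the exponential's coefficient. When $\sigma<0$ the exponential decays to $0$, leaving the polynomial part: for $m=1$ only the constant $\frac{\eta\lambda}{\sigma}$ survives, giving the finite limit $\frac{\eta\lambda}{\sigma}$, while for $m\geq3$ the dominant term is the leading monomial of $p_{2,m}$, which one checks in each of the three branches of (\ref{co_IrV2:p_2m}) to equal $\frac{c\lambda}{\sigma}t^{p}$.

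Step (ii) is then pure sign bookkeeping, and this is the only place that needs care. The single observation that makes everything collapse is that the constant $c$ of (\ref{pr_IrV1:c}) has the same sign as $\eta$, because $\frac{2^{m-1}m}{\sqrt{\pi}}\Gamma(m/2)>0$. Using $\lambda,\nu,\mu>0$ together with $c_{2,m}=\frac{c\nu\mu\,p!}{\sigma(\lambda\sigma)^{p}}$ from (\ref{co_IrV2:c_2m}) and the leading coefficient $\frac{c\lambda}{\sigma}$ of $p_{2,m}$, one finds: for $\sigma>0$ the exponential coefficient has the sign of $\eta$, hence of $\sigma\eta$; for $\sigma<0$ the polynomial leading coefficient $\frac{c\lambda}{\sigma}$ has the sign of $-\eta$, which again is the sign of $\sigma\eta$. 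This yields the stated dichotomy between $+\infty$ and $-\infty$ according to the sign of $\sigma\eta$ for $m\geq3$, and the three-way split for $m=1$ (two infinite limits when $\sigma>0$, determined by the sign of $\eta$, and the finite value $\frac{\eta\lambda}{\sigma}$ when $\sigma<0$).

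Finally, for $\sigma=0$ the expression (\ref{pr_IrV2:Ux}) is a genuine polynomial in $t$: for $m=1$ it is $\eta(1-\lambda^2t)$ with leading slope $-\eta\lambda^2$, and for $m>1$ its dominant term is $-\frac{2c\lambda^2}{m+1}t^{(m+1)/2}$, whose coefficient has the sign of $-\eta$. In both cases the limit is $-\infty$ when $\eta>0$ and $+\infty$ when $\eta<0$, as claimed. I expect no genuine obstacle beyond keeping the sign computation in step (ii) organised; the asymptotics themselves are the routine comparison of exponential against polynomial growth.
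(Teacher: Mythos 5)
Your proposal is correct and follows essentially the same route as the paper, which states this corollary as an immediate consequence of taking $t\to+\infty$ in the explicit expressions of Proposition \ref{pr:IrV2} and Corollary \ref{co:IrV2}; your sign bookkeeping (in particular that $c$ has the sign of $\eta$ and that the leading monomial of $p_{2,m}$ is $\frac{c\lambda}{\sigma}t^{p}$) fills in exactly the computations the paper omits.
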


\begin{proposition}\label{pr:IrV3}
Let $F$, $h$ and $\Phi=\varphi_3$ defined as in (\ref{th_Ir:F}), (\ref{def:h}) and (\ref{def:phi123}), and $\delta=\lambda-\nu\mu$. Then the heat flux at the boundary $x=0$ corresponding to the solution $u$ (see (\ref{th_Ir:u})) of the Problem P is given by:
\begin{enumerate}
\item if $\delta\neq 0$ then:
{\small \begin{equation}\label{pr_IrV3:ux}
u_x(0,t)=\left\{\begin{array}{ccc}
                 \frac{\eta}{\delta}\left(\lambda-\nu\mu\exp{(-\lambda\delta t)}\right)& &\text{if }m=1\\
                 \frac{c\lambda}{\delta}t^{(m-1)/2}-\frac{c(m-1)\nu\mu}{2\delta}\exp{(-\lambda\delta t)}\displaystyle\int_0^t\tau^{(m-3)/2}\exp{(\lambda\delta\tau)}d\tau& &\text{if }m>1
                \end{array}\right.,\hspace{0.25cm}t>0\text{.}
\end{equation}}
\item If $\delta=0$ then:
\begin{equation}\label{pr_IrV3:Ux}
u_x(0,t)=\left\{\begin{array}{ccc}
                 \eta(1+\lambda^2t)& &\text{if }m=1\\
                 c t^{(m-1)/2}+\frac{2c\lambda^2}{m+1}t^{(m+1)/2}& &\text{if }m>1
                \end{array}\right.,\hspace{0.5cm}t>0\text{.}
\end{equation}
\end{enumerate}
\end{proposition}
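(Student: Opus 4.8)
The plan is to follow the same route as in Propositions \ref{pr:IrV1} and \ref{pr:IrV2}: by Theorem \ref{th:Ir} the heat flux $u_x(0,t)=V(t)$ is the solution of the Volterra integral equation (\ref{eq:V}), so it suffices to identify the source $V_0$ and the kernel $R$ attached to the present choice $h(x)=\eta x^m$, $\Phi=\varphi_3=-\mu\sin(\lambda x)$, and then solve (\ref{eq:V}) explicitly. Since $h'(\xi)=\eta m\,\xi^{m-1}$ is exactly the one already treated in Proposition \ref{pr:IrV1}, the function $V_0$ in (\ref{th_Ir:V0}) is again $V_0(t)=c\,t^{(m-1)/2}$ with $c$ as in (\ref{pr_IrV1:c}); thus the only genuinely new object is the kernel $R$.

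First I would compute $R$. Substituting $\Phi=\varphi_3$ in (\ref{th_Ir:R}) reduces the matter to the Gaussian--sine integral $\int_0^{+\infty}\xi\exp(-\xi^2/4t)\sin(\lambda\xi)\,d\xi=2\sqrt{\pi}\,\lambda\,t^{3/2}\exp(-\lambda^2 t)$, whence
\begin{equation*}
R(t)=-\mu\lambda\exp(-\lambda^2 t),\qquad t>0 .
\end{equation*}
The decisive structural feature is that $R$ is, up to a constant, a single decaying exponential (in contrast with the growing exponential produced by $\varphi_2$ in Proposition \ref{pr:IrV2}), so the convolution kernel of (\ref{eq:V}) is of exponential type and the equation is amenable to the Laplace transform.

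Next I would pass to the resolvent formulation (\ref{re_Ir:V})--(\ref{re_Ir:r}), solving first for $r$. Applying the Laplace transform to (\ref{re_Ir:r}) with the kernel just found and using the identity $\lambda^2-\nu\mu\lambda=\lambda\delta$ gives $\widehat r(s)=(s+\lambda^2)/\big(s(s+\lambda\delta)\big)$. Inverting by partial fractions yields, for $\delta\neq 0$,
\begin{equation*}
r(t)=\frac{1}{\delta}\big(\lambda-\nu\mu\exp(-\lambda\delta t)\big),
\end{equation*}
whereas for $\delta=0$ the two poles coalesce into a double pole at the origin and one obtains instead $r(t)=1+\lambda^2 t$. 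Finally I would reconstruct $V$ from (\ref{re_Ir:V}): when $m=1$ the source $V_0\equiv\eta$ is constant, so $V=\eta\,r$ and (\ref{pr_IrV3:ux})--(\ref{pr_IrV3:Ux}) follow at once; when $m>1$ we have $V_0(0)=0$ and $V=V_0'\!*r$, and splitting this convolution into the elementary power integral $\int_0^t\tau^{(m-3)/2}d\tau$ (which produces the $\tfrac{c\lambda}{\delta}t^{(m-1)/2}$ term) and the remaining exponential integral reproduces precisely the stated formulas.

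The points requiring care are the evaluation of the Gaussian--sine integral defining $R$ and, above all, the separate treatment of the degenerate case $\delta=0$: one cannot simply set $\delta=0$ in the formula valid for $\delta\neq 0$, since the merging of the simple poles at $s=0$ and $s=-\lambda\delta$ into a double pole changes the inverse transform from an exponential into the linear polynomial $1+\lambda^2 t$. An alternative that sidesteps the Laplace inversion, mirroring the proof of Proposition \ref{pr:IrV2}, is to take (\ref{pr_IrV3:ux}) and (\ref{pr_IrV3:Ux}) as an ansatz and verify directly, by differentiating under the integral sign, that they satisfy (\ref{eq:V}) with the kernel $R(t)=-\mu\lambda\exp(-\lambda^2 t)$; this is routine but still demands handling the cases $\delta\neq 0$ and $\delta=0$ separately.
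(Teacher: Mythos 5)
Your proposal is correct and follows essentially the same route as the paper: the kernel $R(t)=-\lambda\mu\exp(-\lambda^2 t)$ you compute is exactly the one recorded in Appendix \ref{app:1}, the formal Laplace-transform solution of the resolvent equation (\ref{re_Ir:r}) is precisely the method the paper itself says led to the expressions in Propositions \ref{pr:IrV2} and \ref{pr:IrV3}, and your fallback of verifying the ansatz directly in (\ref{eq:V}) is what the paper's written proof actually does (via Proposition \ref{pr:IrV2} with $\lambda^2$ replaced by $-\lambda^2$ and $\sigma$ by $\delta$). Your explicit treatment of the degenerate case $\delta=0$ as a double pole is a sound elaboration of the case distinction the paper states without detail.
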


\begin{proof}
The proof of (\ref{pr_IrV3:ux}) and (\ref{pr_IrV3:Ux}) follows by replacing $\lambda^2$ by $-\lambda^2$ and $\sigma$ by $\delta$ in the proof of Proposition \ref{pr:IrV2}.
\end{proof}

\begin{corollary}\label{co:IrV3}
If in Proposition \ref{pr:IrV2} we consider $\delta\neq 0$ and $m$ an odd number given by $m=2p+1$ with $p\in\mathbb{N}$, then we have:
\begin{equation}\label{co_IrV3:ux}
u_x(0,t)=p_{3,m}(c)+c_{3,m}\exp{(-\lambda\delta t)},\hspace{0.5cm}t>0\text{,}
\end{equation}
where $c_{3,m}$ is given by:
\begin{equation}\label{co_IrV3:c_3m}
c_{3,m}=(-1)^{p-1}\frac{c\nu\mu p!}{\delta(\lambda\delta)^p}\text{,}
\end{equation}
being $c$ the constant given in (\ref{pr_IrV1:c}), and $p_{3,m}(x)$ is the polynomial defined by:
\begin{equation}\label{co_IrV2:p_3m}
p_{3,m}(t)=\left\{\begin{array}{ccc}
              c_{3,3}\left(\frac{\lambda^2\delta}{\nu\mu}t-1\right)& &\text{if }m=3\\
              -c_{3,5}\left(\frac{\lambda^3\delta^2}{2\nu\mu}t^2-\lambda\sigma t+1\right)& &\text{if }m=5\\
			  \frac{c\lambda}{\delta}t^p-c_{3,m}\left(\displaystyle\sum_{k=1}^{p-1}\frac{(-\lambda\delta)^k}{k!}t^k+1\right)& &\text{if }m\geq 7
                \end{array}\right.,\hspace{0.25cm}t>0\text{.}
\end{equation}
\end{corollary}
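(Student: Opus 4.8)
The plan is to proceed exactly as in the proof of Corollary \ref{co:IrV2}, namely by evaluating in closed form the integral appearing in the expression of $u_x(0,t)$ given in Proposition \ref{pr:IrV3}. Since $\delta\neq 0$ and $m>1$ here, the relevant expression is the second branch of (\ref{pr_IrV3:ux}),
\begin{equation*}
u_x(0,t)=\frac{c\lambda}{\delta}t^{(m-1)/2}-\frac{c(m-1)\nu\mu}{2\delta}\exp{(-\lambda\delta t)}\displaystyle\int_0^t\tau^{(m-3)/2}\exp{(\lambda\delta\tau)}d\tau\text{.}
\end{equation*}
Writing $m=2p+1$ turns the exponents $(m-1)/2$ and $(m-3)/2$ into the integers $p$ and $p-1$, so the whole computation reduces to evaluating $\int_0^t\tau^{p-1}\exp{(\lambda\delta\tau)}d\tau$ and multiplying by the prefactor $-\frac{cp\nu\mu}{\delta}$ (using $m-1=2p$).

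For $m\geq 7$ (that is $p\geq 3$, so that $p-1\geq 2$) I would apply the identity (\ref{co_IrV1:int}) with $n=p-1$ and $a=\lambda\delta$. The factor $\exp{(-\lambda\delta t)}$ in front of the integral then combines with the $\exp{(at)}=\exp{(\lambda\delta t)}$ produced by the identity: the terms inside the finite sum and the isolated $(-1)^n/a^n$ term become a genuine polynomial in $t$, which together with the free term $\frac{c\lambda}{\delta}t^p$ yields $p_{3,m}(t)$, while the single surviving $\exp{(-\lambda\delta t)}$ term (coming from the last summand $\frac{(-1)^{n+1}}{a^n}\exp{(-at)}$ of the identity) produces the pure exponential contribution $c_{3,m}\exp{(-\lambda\delta t)}$. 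Reading off its coefficient gives $c_{3,m}=(-1)^{p-1}\frac{c\nu\mu p!}{\delta(\lambda\delta)^p}$ as claimed in (\ref{co_IrV3:c_3m}); the sign $(-1)^{p-1}$ arises from $(-1)^{n+1}=(-1)^{p}$ multiplied by the overall minus sign of the prefactor, since $-(-1)^p=(-1)^{p+1}=(-1)^{p-1}$.

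For the two low-order cases $m=3$ ($p=1$) and $m=5$ ($p=2$) the identity (\ref{co_IrV1:int}) does not apply, so the integrals $\int_0^t\exp{(\lambda\delta\tau)}d\tau$ and $\int_0^t\tau\exp{(\lambda\delta\tau)}d\tau$ would be computed directly by elementary integration; substituting back and collecting terms reproduces the first two branches of (\ref{co_IrV2:p_3m}). Alternatively, as the proof of Proposition \ref{pr:IrV3} already indicates for the propositions themselves, all of this can be deduced from Corollary \ref{co:IrV2} by the formal replacements $\sigma\mapsto\delta$ and $\lambda^2\mapsto-\lambda^2$ inherited from $\varphi_3$ versus $\varphi_2$, which accounts for the extra alternating sign $(-1)^{p-1}$ in $c_{3,m}$ relative to $c_{2,m}$.

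The only real difficulty is bookkeeping: keeping the index shift $n=p-1$ consistent, tracking the alternating signs $(-1)^k$ and $(-1)^{p-1}$ through the identity, and correctly separating the polynomial part from the exponential part so that the coefficient of $\exp{(-\lambda\delta t)}$ is isolated cleanly. No new analytic input is needed beyond Proposition \ref{pr:IrV3} and the elementary identity (\ref{co_IrV1:int}).
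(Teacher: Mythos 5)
Your proposal is correct and follows essentially the same route as the paper, whose proof consists precisely of solving the integral in the expression (\ref{pr_IrV3:ux}) of Proposition \ref{pr:IrV3} (not Proposition \ref{pr:IrV2}, despite the typo in the corollary's statement) with the help of the identity (\ref{co_IrV1:int}) for $m\geq 7$, the low cases $m=3,5$ being handled by direct integration. Your bookkeeping is sound — the coefficient of $\exp{(-\lambda\delta t)}$ indeed comes out as $(-1)^{p-1}\frac{c\nu\mu p!}{\delta(\lambda\delta)^p}$ — and your computation even confirms that the term $\lambda\sigma t$ in the $m=5$ branch of (\ref{co_IrV2:p_3m}) should read $\lambda\delta t$.
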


\begin{proof}
It follows by solving the corresponding integral in the expression (\ref{pr_IrV3:ux}) and the use of the identity (\ref{co_IrV1:int}).
\end{proof}

\begin{corollary}\label{co:IrVinf3}
Let $F$, $h$ and $\Phi=\varphi_3$ defined as in (\ref{th_Ir:F}), (\ref{def:h}) and (\ref{def:phi123}), with $m$ and odd number, and $\delta=\lambda-\nu\mu$. If $u$ is the solution of the Problem P, given in (\ref{th_Ir:u}), then:
\begin{enumerate}
\item If $\delta\neq0$ then:
\begin{enumerate}
\item if $m=1$, we have:
\begin{equation}
\displaystyle\lim_{t\to+\infty}u_x(0,t)=\left\{\begin{array}{ccc}
                                            -\infty &\text{if }\delta<0,&\eta<0\\
											+\infty &\text{if }\delta<0,&\eta>0\\																	                        					 \frac{\eta\lambda}{\delta} &\text{if }\delta>0&
                                           \end{array}\right.\text{,}
\end{equation}
\item if $m=3$ or $m=5$, we have:
\begin{equation}
\displaystyle\lim_{t\to+\infty}u_x(0,t)=\left\{\begin{array}{ccc}
                                            -\infty& &\text{if }\eta<0\\
											+\infty& &\text{if }\eta>0
											   \end{array}\right.
\end{equation}
\end{enumerate}
and
\begin{enumerate}[resume]
\item if $m\geq 7$, we have:
\begin{equation}
\displaystyle\lim_{t\to+\infty}u_x(0,t)=\left\{\begin{array}{ccc}
                                            -\infty &\text{if }\delta\eta<0\\
											+\infty &\text{if }\delta\eta>0\\
											   \end{array}
											   \right.\text{.}
\end{equation}
\end{enumerate}
\item If $\delta=0$ then:
\begin{equation}
\displaystyle\lim_{t\to+\infty}u_x(0,t)=\left\{\begin{array}{ccc}
                                            -\infty& &\text{if }\eta<0\\
											+\infty& &\text{if }\eta>0
											   \end{array}\right.\text{.}
\end{equation}
\end{enumerate}
\end{corollary}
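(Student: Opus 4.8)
The plan is to obtain each limit by letting $t\to+\infty$ in the closed-form expressions for $u_x(0,t)$ already derived for $\Phi=\varphi_3$, precisely as was done for $\varphi_1$ in Corollary~\ref{co:IrVinf1}. Two facts drive every case: since $\lambda>0$, the sign of the exponent in $\exp(-\lambda\delta t)$ coincides with the sign of $\delta=\lambda-\nu\mu$; and the constant $c$ of~(\ref{pr_IrV1:c}) carries the sign of $\eta$, because the prefactor $\tfrac{2^{m-1}m}{\sqrt{\pi}}\Gamma\!\left(\tfrac{m}{2}\right)$ is positive for every $m>0$. With these in hand, each case reduces to deciding whether a polynomial term or an exponential term dominates, and then reading off the sign of the dominant coefficient.

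First I would dispatch the subcase $\delta\neq0$, $m=1$ directly from the top line of~(\ref{pr_IrV3:ux}), where $u_x(0,t)=\tfrac{\eta}{\delta}\left(\lambda-\nu\mu\exp(-\lambda\delta t)\right)$. If $\delta>0$ the exponential vanishes and the limit equals the constant $\tfrac{\eta\lambda}{\delta}$; if $\delta<0$ the exponential diverges and the dominant term $-\tfrac{\eta\nu\mu}{\delta}\exp(-\lambda\delta t)$ tends to $\pm\infty$ with the sign of $\eta$, since $-\tfrac{1}{\delta}>0$ and $\nu\mu>0$. This yields the three listed alternatives.

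For $\delta\neq0$ and odd $m\geq3$ I would start from the representation $u_x(0,t)=p_{3,m}(t)+c_{3,m}\exp(-\lambda\delta t)$ of~(\ref{co_IrV3:ux}), splitting on the sign of $\delta$. When $\delta>0$ the exponential vanishes in the limit, so the behaviour is that of the polynomial $p_{3,m}$; from~(\ref{co_IrV2:p_3m}) its genuine leading term is $\tfrac{c\lambda}{\delta}t^{p}$ of degree $p=(m-1)/2$ (the contributions of degree $p-1$ and below from the finite sum are subdominant), whose coefficient has the sign of $\eta$, forcing the stated infinite limit. When $\delta<0$ the factor $\exp(-\lambda\delta t)$ grows faster than every power of $t$, so the limit is dictated by $c_{3,m}\exp(-\lambda\delta t)$; I would then determine the sign of $c_{3,m}$ from~(\ref{co_IrV3:c_3m}) by combining the parity factor $(-1)^{p-1}$ with the sign of $\delta(\lambda\delta)^{p}$, obtaining the corresponding infinite limit. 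The case $\delta=0$ is immediate from~(\ref{pr_IrV3:Ux}): for $m=1$ the expression $\eta(1+\lambda^{2}t)$ is affine with slope of sign $\eta$, and for $m>1$ the leading term $\tfrac{2c\lambda^{2}}{m+1}t^{(m+1)/2}$ again inherits the sign of $\eta$, so in both instances $u_x(0,t)\to+\infty$ for $\eta>0$ and $\to-\infty$ for $\eta<0$.

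The work here is bookkeeping rather than conceptual, and the only place demanding care is the sign analysis: one must be sure that for $\delta>0$ the leading coefficient of $p_{3,m}$ is exactly $\tfrac{c\lambda}{\delta}$ and not an artefact of the lower-order terms in~(\ref{co_IrV2:p_3m}), and one must track the signs of $(-1)^{p-1}$, $\delta$ and $(\lambda\delta)^{p}$ through $c_{3,m}$ without slip in the $\delta<0$ regime. I would keep $c$ written throughout as $\eta$ times a manifestly positive constant, which renders the sign of every dominant coefficient transparent and lets the limits be read off directly.
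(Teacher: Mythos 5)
Your overall route --- reading the $t\to+\infty$ limits off the closed forms in Proposition~\ref{pr:IrV3} and Corollary~\ref{co:IrV3} --- is exactly the one the paper intends (the paper states these asymptotic corollaries without written proof, remarking that they follow by taking limits in the explicit expressions), and your handling of the cases $m=1$, $\delta\neq0$ and $\delta=0$ is complete and correct. (A small slip in your opening sentence: for $t>0$ the exponent $-\lambda\delta t$ has the sign \emph{opposite} to that of $\delta$, not equal to it; your subsequent case analysis uses the correct convention, so nothing downstream is affected.)

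The genuine gap sits precisely where you yourself say care is demanded, and you never close it: for $\delta<0$ and odd $m\geq3$ you announce that one "determines the sign of $c_{3,m}$ \ldots obtaining the corresponding infinite limit" without doing the computation, and the computation does \emph{not} deliver the statement as printed. From (\ref{co_IrV3:c_3m}), $c_{3,m}=(-1)^{p-1}\,c\,\nu\mu\,p!/(\lambda^{p}\delta^{p+1})$; when $\delta<0$ the sign of $\delta^{p+1}$ is $(-1)^{p+1}$, which cancels the parity factor $(-1)^{p-1}$, so $\mathrm{sign}(c_{3,m})=\mathrm{sign}(c)=\mathrm{sign}(\eta)$ for \emph{every} odd $m\geq3$ (one can also see this directly from (\ref{pr_IrV3:ux}): the dominant term is $-\tfrac{cp\nu\mu}{\delta}e^{-\lambda\delta t}\int_0^t\tau^{p-1}e^{\lambda\delta\tau}d\tau$, a product of $e^{-\lambda\delta t}\to+\infty$, a positive integral, and $-c/\delta$, which has the sign of $\eta$). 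Hence for $\delta<0$ the limit is $\mathrm{sign}(\eta)\cdot\infty$ uniformly in $m$: this proves item 1(b) but \emph{contradicts} item 1(c), which for $\delta<0$, $\eta>0$ (so $\delta\eta<0$) asserts the limit $-\infty$. No sign bookkeeping can produce $\mathrm{sign}(\delta\eta)$ there; item 1(c) is correct only for $\delta>0$ (where $\mathrm{sign}(\delta\eta)=\mathrm{sign}(\eta)$) and for $\delta<0$ appears to be an erratum carried over from the $\varphi_2$ case of Corollary~\ref{co:IrVinf2}, where the exponential is $\exp(\lambda\sigma t)$ and the answer genuinely is $\mathrm{sign}(\sigma\eta)$. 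A related warning you missed: your argument treats all odd $m\geq3$ identically, so it could never reproduce the distinction the statement draws between $m\in\{3,5\}$ and $m\geq7$; that mismatch alone should have forced you to finish the sign computation and either reconcile with, or explicitly correct, the claimed formula.
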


Next result is related to the behaviour of the heat flux $u_x(0,t)$ at the face $x=0$ when $t$ tends to $0^+$, and shows that it is independent on the choice of $\Phi$ as any of the functions given in (\ref{def:phi123}).

\begin{corollary}\label{co:IrV0+}
If $F$, $h$ and $\Phi$ are given as in (\ref{th_Ir:F}), (\ref{def:h}) and any of the expressions in (\ref{def:phi123}), respectively, then:
\begin{equation}\label{co_IrV0+:V0+}
\displaystyle\lim_{t\to0^+}u_x(0,t)=\left\{\begin{array}{ccc}
                                            \eta& &\text{if }m=1\\
											 0& &\text{if }m>1
                                           \end{array}\right.\text{,}
\end{equation}
where $u$ is the solution of the Problem P given in (\ref{th_Ir:u}).
\end{corollary}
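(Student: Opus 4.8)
The plan is to work directly from the Volterra integral equation (\ref{eq:V}) satisfied by $V(t)=u_x(0,t)$, rather than from the three closed forms of Propositions \ref{pr:IrV1}, \ref{pr:IrV2} and \ref{pr:IrV3}, so that the independence of the limit on the choice of $\Phi$ becomes transparent. Since $h(x)=\eta x^m$, formula (\ref{th_Ir:V0}) gives $V_0(t)=c\,t^{(m-1)/2}$ with $c$ as in (\ref{pr_IrV1:c}); evaluating $\Gamma(1/2)=\sqrt{\pi}$ shows that $c=\eta$ precisely when $m=1$. Hence $V_0(t)\to\eta$ as $t\to0^+$ if $m=1$, while $V_0(t)\to0$ if $m>1$, which already yields the two candidate values in (\ref{co_IrV0+:V0+}). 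It therefore remains only to prove that the convolution term $\nu\int_0^tR(t-\tau)V(\tau)\,d\tau$ tends to $0$ as $t\to0^+$.

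The key observation is that the kernel $R$ stays bounded near the origin for every admissible $\Phi$. From (\ref{th_Ir:R}), a standard Gaussian moment computation gives $\int_0^{+\infty}\xi^2\exp(-\xi^2/4t)\,d\xi=2\sqrt{\pi}\,t^{3/2}$; since each $\varphi_i$ in (\ref{def:phi123}) satisfies $\varphi_i(0)=0$ and is smooth, writing $\Phi(\xi)=\Phi'(0)\xi+o(\xi)$ near $0$ and exploiting the concentration of the kernel $\xi\exp(-\xi^2/4t)/(2\sqrt{\pi}\,t^{3/2})$ at $\xi=0$ gives $\lim_{t\to0^+}R(t)=\Phi'(0)$, a finite value ($\lambda$ for $\varphi_1$, $-\lambda\mu$ for $\varphi_2$ and $\varphi_3$). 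In particular $R$ is bounded on some interval $(0,t_0]$, say $|R(s)|\le M$, so that $\int_0^t|R(s)|\,ds\le Mt\to0$. This is the only place where $\Phi$ enters, and only through a finite bound, which is exactly why the limit does not depend on the particular choice among the three functions.

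To close the argument I would bound $V$ near $t=0$. Using $|R|\le M$ and $|V_0|\le B$ on $(0,t_0]$ (both finite by the above and by $m\ge1$), the Volterra equation (\ref{eq:V}) gives $|V(t)|\le B+\nu M\int_0^t|V(\tau)|\,d\tau$, whence $|V(t)|\le B\exp(\nu Mt)$ by Gronwall's inequality; thus $V$ is bounded on $(0,t_0]$ by some constant $B'$. Consequently $\bigl|\nu\int_0^tR(t-\tau)V(\tau)\,d\tau\bigr|\le \nu MB't\to0$, so that $\lim_{t\to0^+}V(t)=\lim_{t\to0^+}V_0(t)$, which is the assertion (\ref{co_IrV0+:V0+}).

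The main obstacle I anticipate is the justification that $R(t)$ remains bounded (indeed converges to $\Phi'(0)$) as $t\to0^+$ for $\varphi_2$ and $\varphi_3$, since here $R$ is no longer the constant $\lambda$ obtained for $\varphi_1$ in the proof of Proposition \ref{pr:IrV1}; this requires a careful dominated-convergence estimate of the concentrating integral in (\ref{th_Ir:R}). As an independent check, one may instead simply pass to the limit $t\to0^+$ in the explicit expressions of Propositions \ref{pr:IrV1}--\ref{pr:IrV3}: for $m=1$ every formula reduces to $\eta$ (using $\sigma=\lambda+\nu\mu$ and $\delta=\lambda-\nu\mu$), while for $m>1$ each term is a power $t^{(m-1)/2}$, $t^{(m+1)/2}$ or an expression $\exp(at)\int_0^t\tau^{(m-3)/2}\exp(-a\tau)\,d\tau$ that vanishes as $t\to0^+$ because $(m-3)/2>-1$ makes the integrand locally integrable at $0$.
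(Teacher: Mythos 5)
Your proposal is correct, but it follows a genuinely different route from the paper's. The paper's proof is exactly your closing ``independent check'': it simply passes to the limit $t\to 0^+$ in the closed-form expressions of $u_x(0,t)$ given in Propositions \ref{pr:IrV1}, \ref{pr:IrV2} and \ref{pr:IrV3}, case by case according to the choice of $\Phi$. Your main argument instead works structurally from the Volterra equation (\ref{eq:V}): since $V_0(t)=c\,t^{(m-1)/2}$ with $c=\eta$ precisely when $m=1$, and since the kernel $R$ is bounded on a right neighbourhood of $t=0$, a Gronwall bound gives $|V(t)|\le B\exp(\nu Mt)$ there, so the convolution term is $O(t)$ and $\lim_{t\to0^+}V=\lim_{t\to0^+}V_0$. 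What this buys is generality and transparency: the conclusion holds for \emph{any} $\Phi$ whose kernel (\ref{th_Ir:R}) stays bounded near $t=0$, which explains conceptually why the answer is the same for all three functions in (\ref{def:phi123}), whereas the paper's computation must be redone for each $\Phi$ (though it is immediate once Propositions \ref{pr:IrV1}--\ref{pr:IrV3} are in hand). Two small points. First, the ``main obstacle'' you anticipate---boundedness of $R$ near $0$ for $\varphi_2$ and $\varphi_3$---is already settled in Appendix \ref{app:1}, where $R$ is computed explicitly: $R(t)=\lambda$, $R(t)=-\lambda\mu\exp(\lambda^2t)$ and $R(t)=-\lambda\mu\exp(-\lambda^2t)$, respectively; no dominated-convergence argument is needed, and these formulas confirm your claim that $R(t)\to\Phi'(0)$ as $t\to0^+$. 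Second, your Gronwall step implicitly assumes that $V$ is locally integrable (indeed continuous) up to $t=0$; this is guaranteed by the Volterra theory the paper relies on \cite{Mi1971}, since the forcing $V_0$ and the kernel $R$ are continuous on $[0,t_0]$, and once that continuity is granted you could even dispense with Gronwall, as boundedness of $V$ near $0$ is then automatic.
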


\begin{proof}
It follows straightforward by computing the limit for the expression of $u_x(0,t)$ given in Proposition \ref{pr:IrV1}, \ref{pr:IrV2} or \ref{pr:IrV3}, according the definition of $\Phi$.
\end{proof}

We end this section by giving explicit solutions to each Problem P. The proofs of the three following propositions follow from Theorem \ref{th:Ir} and Corollary \ref{co:IrV1}, \ref{co:IrV2} or \ref{co:IrV3}, according to the definition of $\Phi$ (see Appendix \ref{app:2}).

\begin{proposition}\label{pr:Iru1}
If $F$, $h$ and $\Phi=\varphi_1$ are defined as in (\ref{th_Ir:F}), (\ref{def:h}) and (\ref{def:phi123}), where $m$ is an odd number given by $m=2p+1$, with $p\in\mathbb{N}_0$, then the function $u$ defined by:
\begin{equation}\label{pr_Iru1:u}
u(x,t)=u_0(x,t)-\nu\Phi(x)\displaystyle\int_0^tV(\tau)d\tau,\hspace{0.5cm}x\geq0,\,t\geq0
\end{equation}
is a solution to Problem P, where $u_0$ is defined by:
\begin{equation}\label{pr_Iru1:u0}
u_0(x,t)=\frac{\eta}{\sqrt{\pi}}\displaystyle\sum_{k=0}^{p}\binom{m}{2k}\Gamma\left(\frac{2k+1}{2}\right)(4t)^kx^{m-2k},\hspace{0.5cm}x\geq0,\,t\geq0\text{,}
\end{equation}
and $V(t)=u_x(0,t)$ is given by (\ref{co_IrV1:ux}).
\end{proposition}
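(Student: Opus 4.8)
The plan is to start from the integral representation (\ref{th_Ir:u}) furnished by Theorem \ref{th:Ir}, whose hypotheses are satisfied by the present data $F(V,t)=\nu V$, $h(x)=\eta x^m$ and $\Phi=\varphi_1(x)=\lambda x$ (as remarked before the statement), and to evaluate its two terms explicitly. Writing $G$ as the difference of the two heat kernels in (\ref{th_Ir:G})--(\ref{th_Ir:K}), the proof reduces to computing the two space integrals $\int_0^{+\infty}G(x,t,\xi,0)h(\xi)\,d\xi$ and $\int_0^{+\infty}G(x,t,\xi,\tau)\Phi(\xi)\,d\xi$.

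For the first term I would show that $\int_0^{+\infty}G(x,t,\xi,0)\eta\xi^m\,d\xi$ equals the polynomial $u_0$ of (\ref{pr_Iru1:u0}). The cleanest route is a uniqueness argument: verify directly that the polynomial in (\ref{pr_Iru1:u0}) satisfies the homogeneous heat equation $\partial_t u_0=\partial_{xx}u_0$ (a term-by-term check matching the coefficients of $t^{k+1}$ against the second $x$-derivative of the $t^k$ term), that $u_0(x,0)=\eta x^m$, and that $u_0(0,t)=0$ because every summand carries a factor $x^{m-2k}$ with $m-2k\geq m-2p=1$; since $u_0$ has polynomial growth it lies in the uniqueness class underlying Theorem \ref{th:Ir}, so the Green representation of this homogeneous Dirichlet problem must coincide with $u_0$. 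Alternatively one may compute the Gaussian moments directly: the oddness of $\xi^m$ (here $m$ is odd) merges the two kernels into an integral over the whole line, and the moments of the Gaussian produce exactly the factors $\Gamma\!\big(\tfrac{2k+1}{2}\big)$ via $\Gamma(k+\tfrac12)=\tfrac{(2k)!}{4^k k!}\sqrt{\pi}$, reproducing (\ref{pr_Iru1:u0}). Either way $u_0$ is identified with $\eta$ times the degree-$m$ heat polynomial.

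The key step is the second integral, $\int_0^{+\infty}G(x,t,\xi,\tau)\Phi(\xi)\,d\xi$ with $\Phi(\xi)=\lambda\xi$. The decisive observation is that, for fixed $\tau$, the map $(x,t)\mapsto\int_0^{+\infty}G(x,t,\xi,\tau)\Phi(\xi)\,d\xi$ solves the homogeneous heat equation with zero Dirichlet condition at $x=0$ and initial value $\lambda x$ assigned at time $t=\tau$; since $\lambda x$ is itself a time-independent degree-one heat polynomial vanishing at $x=0$, it is propagated unchanged, so the integral equals $\Phi(x)=\lambda x$ for every $t>\tau$. (This is precisely the $m=1$ instance of the first-step computation.) I expect this evaluation to be the main obstacle; recognizing that $\varphi_1$ is \emph{fixed} by the Dirichlet heat propagator is exactly what collapses the double integral and is special to $\Phi=\varphi_1$.

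Finally I would substitute both evaluations into (\ref{th_Ir:u}). Because the inner space integral is constant in $\tau$ and equal to $\Phi(x)$, it factors out of the time integral, giving $u(x,t)=u_0(x,t)-\nu\Phi(x)\int_0^t V(\tau)\,d\tau$, which is (\ref{pr_Iru1:u}). It remains only to recall that the heat flux $V(t)=u_x(0,t)$ appearing here is the one already determined in Corollary \ref{co:IrV1}, equation (\ref{co_IrV1:ux}), which completes the identification and the proof.
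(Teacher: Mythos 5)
Your proposal is correct, and its overall architecture --- evaluate the two Green-function integrals in (\ref{th_Ir:u}) and then substitute --- is exactly that of the paper's proof (Appendix \ref{app:2}). The difference lies in how the integrals are evaluated. The paper computes $\int_0^{+\infty}G(x,t,\xi,0)h(\xi)\,d\xi$ by brute force: the substitution $\zeta=(x-\xi)/2\sqrt{t}$, a binomial expansion, and a parity cancellation between the two kernel pieces (the incomplete, erf-type integrals drop out) yield (\ref{pr_Iru1:u0}); the second integral is then obtained by the formal replacement $m=1$, $\eta\mapsto\lambda$, $t\mapsto t-\tau$, giving $\varphi_1(x)$. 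Your primary route replaces both computations by a verification-plus-uniqueness argument: you check that the polynomial (\ref{pr_Iru1:u0}) --- which, via $\Gamma\left(k+\tfrac{1}{2}\right)=\tfrac{(2k)!}{4^k k!}\sqrt{\pi}$, is $\eta$ times the degree-$m$ heat polynomial --- solves the homogeneous Dirichlet problem with data $\eta x^m$, and that $\lambda x$ is fixed by the Dirichlet heat propagator, and then invoke uniqueness in a Tychonoff-type growth class. This buys brevity and avoids all the erf bookkeeping, but it quietly imports two classical facts the paper's purely computational proof never needs: that the Green representation itself solves the initial--boundary value problem in the classical sense, and that uniqueness holds on the half line within the stated growth class; both are standard, so this is a legitimate shortcut rather than a gap. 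Your alternative route (merging the two half-line kernels into a single whole-line Gaussian integral using the oddness of $\xi^m$, then reading off the moments $\int_{-\infty}^{+\infty}e^{-s^2}s^{2k}\,ds=\Gamma\left(\tfrac{2k+1}{2}\right)$) is essentially the paper's computation streamlined, and is arguably cleaner than the paper's split at $\xi=0$. Both of your variants, like the paper, correctly conclude by factoring the $\tau$-independent inner integral out of the time integral to obtain (\ref{pr_Iru1:u}), with $V$ given by (\ref{co_IrV1:ux}).
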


\begin{remark}
If $m=1$, polynomial $p_{1,m}(x)$ is defined by $p_{1,m}(x)=0$, $x>0$.
\end{remark}

\begin{proposition}\label{pr:Iru2}
If $F$, $h$ and $\Phi=\varphi_2$ are defined as in (\ref{th_Ir:F}), (\ref{def:h}) and (\ref{def:phi123}), where $\sigma\neq0$ and $m$ is an odd number given by $m=2p+1$, with $p\in\mathbb{N}_0$, then the function $u$ defined by:
\begin{equation}\label{pr_Iru2:u}
u(x,t)=u_0(x,t)-\nu\Phi(x)\exp(\lambda^2t)\displaystyle\int_0^tV(\tau)\exp{(-\lambda^2\tau)}d\tau,\hspace{0.5cm}x\geq0,\,t\geq0
\end{equation}
is a solution to Problem P, where $u_0$ and $V(t)=u_x(0,t)$ are given by  (\ref{pr_Iru1:u0}) and (\ref{co_IrV2:ux}).
\end{proposition}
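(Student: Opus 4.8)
The plan is to start from the integral representation (\ref{th_Ir:u}) supplied by Theorem \ref{th:Ir}, whose hypotheses hold for the present choice of $F$, $h$ and $\Phi=\varphi_2$ (as checked in Appendix \ref{app:1}). Writing that representation as $u(x,t)=I_0(x,t)-\nu\int_0^t I_\Phi(x,t,\tau)V(\tau)\,d\tau$, where $I_0(x,t)=\int_0^{+\infty}G(x,t,\xi,0)h(\xi)\,d\xi$ and $I_\Phi(x,t,\tau)=\int_0^{+\infty}G(x,t,\xi,\tau)\Phi(\xi)\,d\xi$, the whole task reduces to evaluating these two $\xi$-integrals explicitly and then inserting the expression for $V(t)=u_x(0,t)$ already obtained in Corollary \ref{co:IrV2}. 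The explicit formula (\ref{co_IrV2:ux}) for $V$ requires $\sigma\neq 0$, which is precisely the standing hypothesis of the proposition.

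The key observation, which handles both integrals at once, is an oddness/reflection argument. Since $K(-x,t,\xi,\tau)=K(x,t,-\xi,\tau)$, the Dirichlet Green function $G=K(x,t,\xi,\tau)-K(-x,t,\xi,\tau)$ satisfies $\int_0^{+\infty}G(x,t,\xi,\tau)g(\xi)\,d\xi=\int_{-\infty}^{+\infty}K(x,t,\xi,\tau)g(\xi)\,d\xi$ for every odd function $g$, because the reflected term converts the half-line integral into the integral over the negative axis. Both data functions here are odd: $h(\xi)=\eta\xi^m$ because $m=2p+1$ is odd, and $\sinh(\lambda\xi)$ is odd. Hence both integrals collapse to classical full-line Gaussian convolutions.

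For the source integral I would use that $e^{\pm\lambda\xi}$ are exponential eigenfunctions of the heat flow, namely $\int_{-\infty}^{+\infty}K(x,t,\xi,\tau)e^{\pm\lambda\xi}\,d\xi=e^{\pm\lambda x+\lambda^2(t-\tau)}$, which follows from a one-line completion of the square. Writing $\sinh(\lambda\xi)=\tfrac12(e^{\lambda\xi}-e^{-\lambda\xi})$ then gives $I_\Phi(x,t,\tau)=-\mu\sinh(\lambda x)\,e^{\lambda^2(t-\tau)}=\Phi(x)\,e^{\lambda^2(t-\tau)}$; factoring $e^{\lambda^2 t}$ out of the $\tau$-integral reproduces exactly the source term $-\nu\Phi(x)\exp(\lambda^2t)\int_0^t V(\tau)\exp(-\lambda^2\tau)\,d\tau$ of (\ref{pr_Iru2:u}). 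For $I_0$ I would substitute $\xi=x+2\sqrt{t}\,w$, expand $(x+2\sqrt{t}\,w)^m$ by the binomial theorem, and use $\int_{-\infty}^{+\infty}e^{-w^2}w^{2k}\,dw=\Gamma\!\big(\tfrac{2k+1}{2}\big)$ while the odd powers integrate to zero; since $m$ is odd the surviving terms run over $k=0,\dots,p$ and I recover $u_0(x,t)$ exactly as in (\ref{pr_Iru1:u0}).

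Assembling the two evaluations into (\ref{th_Ir:u}) yields (\ref{pr_Iru2:u}). I expect the only genuine subtlety to be the reflection step together with the identification of $\lambda^2$ as the eigenvalue of $\sinh(\lambda\,\cdot)$ under $\partial_{xx}$: it is this eigenvalue that produces the weight $e^{\lambda^2 t}$ distinguishing the $\varphi_2$ case from the polynomial $\varphi_1$ case of Proposition \ref{pr:Iru1}. Everything after that, namely the completion of the square and the Gaussian moment formula, is routine, which is why these computations are deferred to Appendix \ref{app:2}.
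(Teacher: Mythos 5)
Your argument is correct, and its skeleton is the same as the paper's: invoke the representation (\ref{th_Ir:u}) of Theorem \ref{th:Ir} (hypotheses verified as in Appendix \ref{app:1}), evaluate the two $\xi$-integrals in closed form, and insert the flux $V$ of Corollary \ref{co:IrV2}, which is exactly where $\sigma\neq0$ and $m=2p+1$ are needed. Where you genuinely depart from the paper is in how those integrals are evaluated, which is the entire technical content of Appendix \ref{app:2}. The paper stays on the half-line: it substitutes $\zeta=(x-\xi)/2\sqrt{t}$, respectively completes the square, separately in the $K(x,t,\xi,\tau)$ and $K(-x,t,\xi,\tau)$ pieces; this generates incomplete Gaussian integrals and $\text{erf}$-terms, and only at the very end does one observe that they cancel --- via the vanishing factor $1-(-1)^{m+1}$ in the $h$-integral, and via the mutual annihilation of the four error-function terms produced by $\sinh(\lambda\xi)=\frac{1}{2}\left(e^{\lambda\xi}-e^{-\lambda\xi}\right)$ in the source integral. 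Your reflection lemma, $\int_0^{+\infty}G(x,t,\xi,\tau)g(\xi)\,d\xi=\int_{-\infty}^{+\infty}K(x,t,\xi,\tau)g(\xi)\,d\xi$ for odd $g$ (valid since $K(-x,t,\xi,\tau)=K(x,t,-\xi,\tau)$), spends the oddness of $\xi^m$ and of $\sinh(\lambda\xi)$ at the outset, so no error functions or incomplete integrals ever appear: the source integral reduces to the one-line identity $\int_{-\infty}^{+\infty}K(x,t,\xi,\tau)e^{\pm\lambda\xi}\,d\xi=e^{\pm\lambda x+\lambda^2(t-\tau)}$, recovering (\ref{app_2:Gphi2}), and the $h$-integral to the Gaussian moments $\int_{-\infty}^{+\infty}e^{-w^2}w^{2k}\,dw=\Gamma\left(\frac{2k+1}{2}\right)$, recovering (\ref{pr_Iru1:u0}). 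This buys a substantially shorter computation, makes it structurally transparent why the weight $\exp{(\lambda^2(t-\tau))}$ appears (it is the $\partial_{xx}$-eigenvalue of $\sinh(\lambda\,\cdot)$) and why oddness of $m$ is essential rather than incidental, and the same two lines dispose of Propositions \ref{pr:Iru1} and \ref{pr:Iru3} as well (polynomial moments for $\varphi_1$, and $\pm i\lambda$ in place of $\pm\lambda$ for $\varphi_3$). What the paper's longer half-line route records, and yours suppresses, are the intermediate error-function expressions themselves; since these cancel in the final identities, nothing needed for the present proposition is lost.
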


\begin{remark}
If $m=1$, polynomial $p_{2,m}(x)$ is defined by $p_{2,m}(x)=\frac{\nu\lambda}{\sigma}$, $x>0$.
\end{remark}

\begin{proposition}\label{pr:Iru3}
If $F$, $h$ and $\Phi=\varphi_3$ are defined as in (\ref{th_Ir:F}), (\ref{def:h}) and (\ref{def:phi123}), where $\delta\neq0$ and $m$ is an odd number given by $m=2p+1$, with $p\in\mathbb{N}_0$, then the function $u$ defined by:
\begin{equation}\label{pr_Iru3:u}
u(x,t)=u_0(x,t)-\nu\Phi(x)\exp(-\lambda^2t)\displaystyle\int_0^tV(\tau)\exp{(\lambda^2\tau)}d\tau,\hspace{0.5cm}x\geq0,\,t\geq0
\end{equation}
is a solution to Problem P, where $u_0$ and $V(t)=u_x(0,t)$ are given by  (\ref{pr_Iru1:u0}) and (\ref{co_IrV3:ux}).
\end{proposition}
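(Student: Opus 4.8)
The plan is to specialize the integral representation of Theorem \ref{th:Ir} to the present data and to evaluate the two resulting integrals explicitly. Since $F$, $h=\eta x^{m}$ and $\Phi=\varphi_3=-\mu\sin(\lambda x)$ satisfy the hypotheses of that theorem (see Appendix \ref{app:1}), the solution is given by (\ref{th_Ir:u}). The task then reduces to showing that the initial-data integral there equals the heat polynomial $u_0$ in (\ref{pr_Iru1:u0}), and that the double integral collapses to $\Phi(x)\exp(-\lambda^2 t)\int_0^t V(\tau)\exp(\lambda^2\tau)\,d\tau$; after that, the explicit form of $V(t)=u_x(0,t)$ is simply read off from Corollary \ref{co:IrV3}, i.e. from (\ref{co_IrV3:ux}).

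First I would treat the term $\int_0^{+\infty}G(x,t,\xi,0)h(\xi)\,d\xi$. Because $m$ is odd, $\xi^{m}$ is an odd function, and the reflection structure $G=K(x,\cdot)-K(-x,\cdot)$ of (\ref{th_Ir:G}) lets me rewrite this as the full-line Gaussian convolution $\eta\int_{-\infty}^{+\infty}K(x,t,\xi,0)\xi^{m}\,d\xi$. The substitution $\xi=x+2\sqrt{t}\,y$ turns it into $\tfrac{\eta}{\sqrt\pi}\int_{-\infty}^{+\infty}e^{-y^2}(x+2\sqrt t\,y)^{m}\,dy$; expanding by the binomial theorem and retaining only the even moments $\int_{-\infty}^{+\infty}e^{-y^2}y^{2k}\,dy=\Gamma(\tfrac{2k+1}{2})$ reproduces (\ref{pr_Iru1:u0}) term by term (equivalently, one recognizes the classical heat polynomial and applies $\Gamma(k+\tfrac12)=\tfrac{(2k)!}{4^{k}k!}\sqrt\pi$).

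The decisive step is the inner integral $\int_0^{+\infty}G(x,t,\xi,\tau)\Phi(\xi)\,d\xi$. Here the point is that $\sin(\lambda\xi)$ is odd and is an eigenfunction of $\partial_{\xi\xi}$ with eigenvalue $-\lambda^2$, so under the Dirichlet heat semigroup generated by $G$ it is reproduced up to the decay factor $\exp(-\lambda^2(t-\tau))$. Concretely, the same reflection and substitution argument reduces the integral to $-\mu\sin(\lambda x)\cdot\tfrac{1}{\sqrt\pi}\int_{-\infty}^{+\infty}e^{-y^2}\cos\!\left(2\lambda\sqrt{t-\tau}\,y\right)dy$ (the $\cos(\lambda x)\sin(\cdots)$ piece being odd in $y$), and the Gaussian identity $\int_{-\infty}^{+\infty}e^{-y^2}\cos(by)\,dy=\sqrt\pi\,e^{-b^2/4}$ with $b=2\lambda\sqrt{t-\tau}$ gives exactly $\Phi(x)\exp(-\lambda^2(t-\tau))$. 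This is where the characteristic sign $-\lambda^2$ enters, in contrast to the $+\lambda^2$ produced by $\sinh(\lambda x)$ in Proposition \ref{pr:Iru2}.

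Finally I would substitute this value into the double integral of (\ref{th_Ir:u}), factor $\Phi(x)\exp(-\lambda^2 t)$ out of the $\tau$-integral to produce $\exp(-\lambda^2 t)\int_0^t V(\tau)\exp(\lambda^2\tau)\,d\tau$, and thereby arrive at (\ref{pr_Iru3:u}), with $V(t)=u_x(0,t)$ supplied by (\ref{co_IrV3:ux}). Since the only difference from the $\Phi=\varphi_2$ case is the eigenvalue sign, the whole argument is the proof of Proposition \ref{pr:Iru2} with $\lambda^2$ replaced by $-\lambda^2$ and $\sigma$ by $\delta$, mirroring the passage from Proposition \ref{pr:IrV2} to Proposition \ref{pr:IrV3}. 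The main obstacle is solely the eigenfunction/Gaussian evaluation of the inner integral; the remaining manipulations are routine bookkeeping.
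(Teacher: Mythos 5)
Your proposal is correct, and it follows the paper's overall architecture: invoke Theorem \ref{th:Ir} (hypotheses verified as in Appendix \ref{app:1}), evaluate the two Green-function integrals in (\ref{th_Ir:u}), and take $V$ from Corollary \ref{co:IrV3}. Where you genuinely depart from the paper is in how those integrals are evaluated, which is the entire content of the paper's Appendix \ref{app:2}. The paper computes the half-line integrals head-on: for the initial-data term it carries the incomplete Gaussian integrals $\int_0^{x/2\sqrt{t}}e^{-\zeta^2}\zeta^k\,d\zeta$ through the whole computation and lets them cancel only at the very end (their coefficient $1-(-1)^{m+1}$ vanishes because $m$ is odd), and for $\varphi_3$ it invokes tabulated antiderivatives of $e^{-\zeta^2}\cos(\alpha\zeta)$ and $e^{-\zeta^2}\sin(\alpha\zeta)$ expressed through error functions of \emph{complex} argument, which must then be recombined so that all complex terms drop out. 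You instead exploit the odd symmetry of $\xi^m$ (since $m=2p+1$) and of $\sin(\lambda\xi)$ at the outset: the reflection structure (\ref{th_Ir:G}) converts each half-line integral against $G$ into a whole-line convolution with the free kernel $K$, after which only the real identities $\int_{-\infty}^{+\infty}e^{-y^2}y^{2k}\,dy=\Gamma\left(\tfrac{2k+1}{2}\right)$ and $\int_{-\infty}^{+\infty}e^{-y^2}\cos(by)\,dy=\sqrt{\pi}\,e^{-b^2/4}$ are needed; this reproduces (\ref{pr_Iru1:u0}) and (\ref{app_2:Gphi3}) with no complex error functions and no cancellation bookkeeping, and it makes structurally transparent why the factor is exactly $\exp(-\lambda^2(t-\tau))$ (and why the $\varphi_2$ case differs only by the sign of $\lambda^2$). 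What the paper's longer route buys is intermediate formulas valid for arbitrary $m\geq 1$, parity being used only in the last simplification, whereas your reduction to the whole line needs $m$ odd from the start; since the proposition assumes $m=2p+1$, this costs nothing here. One small point worth stating explicitly in your write-up: the splitting and recombination of integrals over $(0,+\infty)$ and $(-\infty,0)$ is legitimate because the Gaussian factor makes every integral involved absolutely convergent.
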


\begin{remark}
If $m=1$, polynomial $p_{3,m}(x)$ is defined by $p_{3,m}(x)=\frac{\nu\lambda}{\delta}$, $x>0$.
\end{remark}

\section{The controlling problem}\label{s:Cp}
This section is devoted to study the effects introduced by the source term $-\Phi F$ in the asymptotic behaviour of the solution $u$ to each Problem P considered in this paper. We will carry out our study by comparing the asymptotic behaviour of $u$ with the asymptotic behaviour of the solution $u_0$ to Problem P in the absence of control (Problem P$_0$):
\begin{align}
\label{eq:u}&u_t(x,t)-u_{xx}(x,t)=0\hspace{0.5cm}	&x>0,\hspace{0.5cm}&t>0 \\
\label{ic:u}&u(x,0)=h(x)\hspace{0.5cm}	&x>0\hspace{0.5cm}&\\
\label{bc:u}&u(0,t)=0	&	&t>0
\end{align}

\noindent This kind of analysis enable us to control Problem P by its source term.

The study of controlling Problem P by its source term has been done in \cite{BeTaVi2000} when $\Phi$ is identically equal to 1, $F=F(V)$ is a differentiable function of one real variable which satisfies:
\begin{enumerate}
\item $VF(V)\geq0,\quad\forall\,V\in\mathbb{R}$,
\item $F(0)=0$,
\item $F$ is convex in $(0,+\infty)$,
\item $\displaystyle\lim_{V\to+\infty}F'(V)=\kappa>0$,
\end{enumerate}
and $h$ is a non-negative, continuous and bounded function. They proved that under these hypothesis, both $u$ and $u_0$ converge to 0 when $t$ tends to $+\infty$ and the control term $F$ has a stabilizing effect because $\displaystyle\lim_{t\to+\infty}\frac{u(x,t)}{u_0(x,t)}=0$, that is, $u$ converge faster to 0 than $u_0$. None of the cases studied in the previous sections fulfil the hypothesis for $\Phi$, $F$ and $h$ established in \cite{BeTaVi2000}.

With the aim of supplementing the results given in \cite{BeTaVi2000}, we will carry out our analysis under conditions which lead us to functions $F$ depending on only one real variable, that is $F=F(V)$.

Next Theorems \ref{th:lim-In}, \ref{th:lim-Sv} and \ref{th:lim-Ir} are respectively related with the results obtained in Sections \ref{ss:In}, \ref{ss:Sv} and \ref{ss:Ir}.

\begin{remark}
For all Problems P studied in this paper, Problem P$_0$ has the solution $u_0$ defined by \cite{Ca1984}:
\begin{equation}\label{u_0}
u_0(x,t)=\displaystyle\int_0^{+\infty}G(x,t,\xi,0)h(\xi)d\xi,\hspace{0.5cm}x\geq0,\,t\geq0\text{,}
\end{equation}
where $G$ is the Green function defined in (\ref{th_Ir:G}).
\end{remark}

\begin{theorem}\label{th:lim-In}
Let $\Phi$ identically equal to 1, $F$ a constant function defined by:
\begin{equation}
F(V)=\nu,\hspace{0.5cm}V\in\mathbb{R}\text{,}
\end{equation}
for some $\nu\in\mathbb{R}-\{0\}$, and $h$ a quadratic function defined by:
\begin{equation}\label{th_lim-In:h}
h(x)=\frac{\nu}{2}x^2+ax,\hspace{0.5cm}x\geq0\text{,}
\end{equation}
for some $a\in\mathbb{R}$.

For the solution $u_0$ to Problem P$_0$ given in (\ref{u_0}), we have:
\begin{equation}\label{th_lim-In:u_0}
\displaystyle\lim_{t\to+\infty}u_0(x,t)=\infty\text{,}\hspace{0.5cm}\forall\,x>0\text{.}
\end{equation}
Furthermore, there exists a solution $u$ to Problem P such that:
\begin{equation}\label{th_lim-In:u}
\displaystyle\lim_{t\to+\infty}u(x,t)=h(x)\text{,}\hspace{0.3cm}\forall\,x>0\text{.}
\end{equation}
\end{theorem}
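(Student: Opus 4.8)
The plan is to treat the two assertions separately; the statement about $u$ is essentially free, while the statement about $u_0$ carries all the work.

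For the claim \eqref{th_lim-In:u} about $u$, I would simply invoke Theorem~\ref{th:In}. Here $\Phi\equiv1$ and $F$ is the constant $\nu$, and the quadratic $h$ in \eqref{th_lim-In:h} satisfies $h''(x)=\nu=\nu\Phi(x)$ for $x>0$, together with $h(0)=0$. Thus the second set of hypotheses in item~2 of Theorem~\ref{th:In} holds, so $u(x,t)=h(x)$ is a time-independent solution of Problem~P. Since this $u$ does not depend on $t$, the limit \eqref{th_lim-In:u} is immediate.

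For the claim \eqref{th_lim-In:u_0} about $u_0$, the idea is to produce a closed form for the integral \eqref{u_0}. Splitting $h(x)=ax+\tfrac{\nu}{2}x^2$ and using linearity of \eqref{u_0} in $h$, the linear part contributes exactly $ax$ for all $t$ (its odd extension $a\xi$ is a spatially linear, hence stationary, solution of the heat equation vanishing at $x=0$), while for the quadratic part I would claim
\begin{equation*}
u_0(x,t)=h(x)+\nu\int_0^t\mathrm{erf}\!\left(\frac{x}{2\sqrt{\tau}}\right)d\tau,\qquad x\ge0,\ t\ge0,
\end{equation*}
with $\mathrm{erf}(z)=\tfrac{2}{\sqrt\pi}\int_0^z e^{-r^2}\,dr$. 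This candidate is checked directly: since $\mathrm{erf}(x/2\sqrt{\tau})$ solves the heat equation in $(x,\tau)$, differentiating under the integral sign gives $\partial_{xx}\!\int_0^t\mathrm{erf}(x/2\sqrt{\tau})\,d\tau=\mathrm{erf}(x/2\sqrt{t})-1$, whence $\partial_t u_0=\partial_{xx}u_0=\nu\,\mathrm{erf}(x/2\sqrt{t})$, and moreover $u_0(x,0)=h(x)$, $u_0(0,t)=0$. By uniqueness for the heat equation in the relevant (Tychonoff-type) growth class, this expression coincides with \eqref{u_0}. Alternatively, one evaluates the half-line Gaussian moment integrals in \eqref{u_0} directly; they reduce to error functions and yield the same formula.

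Finally I would read off the limit. For fixed $x>0$ the integrand $\mathrm{erf}(x/2\sqrt{\tau})$ is strictly positive and behaves like $x/\sqrt{\pi\tau}$ as $\tau\to+\infty$, so $\int_0^t\mathrm{erf}(x/2\sqrt{\tau})\,d\tau$ diverges, growing like $\tfrac{2x}{\sqrt\pi}\sqrt{t}$. Hence $u_0(x,t)\to+\infty$ when $\nu>0$ and $u_0(x,t)\to-\infty$ when $\nu<0$; in either case $u_0$ is unbounded in $t$, which is the content of \eqref{th_lim-In:u_0}, in sharp contrast with the bounded stationary solution $u=h$. The main obstacle is the middle step, namely establishing the closed form for $u_0$ (equivalently, evaluating the half-line Gaussian integral in \eqref{u_0}); once it is in hand, both the divergence of $u_0$ and the comparison with $u=h$ are routine, the only delicate point being the uniqueness class used if one prefers verification to direct evaluation.
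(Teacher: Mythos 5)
Your proposal is correct, and both halves end up where the paper does, but your treatment of $u_0$ takes a genuinely different route. For the limit \eqref{th_lim-In:u} you argue exactly as the paper: $h''(x)=\nu=\nu\Phi(x)$ and $h(0)=0$ place you under item 2 of Theorem \ref{th:In}, so $u(x,t)=h(x)$ is a stationary solution and the limit is trivial. For \eqref{th_lim-In:u_0}, the paper simply evaluates the Gaussian integral in \eqref{u_0} to obtain the closed form \eqref{th_lim-In:uu_0}, namely $u_0(x,t)=\left(\tfrac{\nu}{2}x^2+\nu t\right)\mathrm{erf}\left(\tfrac{x}{2\sqrt{t}}\right)+\tfrac{\nu}{\sqrt{\pi}}x\sqrt{t}\exp\left(-\tfrac{x^2}{4t}\right)+ax$, and reads off the divergence. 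You instead guess the antiderivative form $u_0(x,t)=h(x)+\nu\int_0^t\mathrm{erf}\left(\tfrac{x}{2\sqrt{\tau}}\right)d\tau$, verify it solves Problem P$_0$ by differentiating under the integral sign, and identify it with \eqref{u_0} by Tychonoff-class uniqueness; integrating your erf term by parts shows the two expressions are literally the same function, so the approaches are consistent. What your route buys is that it bypasses the half-line Gaussian moment computation and makes the growth rate transparent (the integrand decays like $x/\sqrt{\pi\tau}$, so $u_0\sim 2\nu x\sqrt{t/\pi}$), and it even sharpens the paper's conclusion by exhibiting the sign: $u_0\to+\infty$ for $\nu>0$ and $u_0\to-\infty$ for $\nu<0$, consistent with the paper's unsigned ``$\infty$''. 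What it costs is two extra ingredients the paper's direct computation does not need: that the representation \eqref{u_0} is itself a solution of the heat equation with the stated data, and a uniqueness theorem in a growth class containing both functions (unproblematic here, since everything grows at most quadratically in $x$, but it should be stated as a hypothesis being used rather than waved at).
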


\begin{proof}
By computing the integral in (\ref{u_0}) for the function $h$ given in (\ref{th_lim-In:h}), we have that the solution $u_0$ to Problem P$_0$ given in (\ref{u_0}) is defined by:
\begin{equation}\label{th_lim-In:uu_0}
u_0(x,t)=\left(\frac{\nu}{2}x^2+\nu t\right)\text{erf}\left(\frac{x}{2\sqrt{t}}\right)+\frac{\nu}{\sqrt{\pi}}x\sqrt{t}\exp{\left(\frac{-x^2}{4t}\right)}+ax,\hspace{0.5cm}x>0,\,t>0\text{.}
\end{equation}
By taking the limit when $t$ tends to $+\infty$, we have (\ref{th_lim-In:u_0}).

Since functions $\Phi$, $F$ and $h$ are under the hypothesis of Theorem \ref{th:In}, we know that the function $u$ given by:
\begin{equation}\label{th_lim-In:uu}
u(x,t)=h(x),\hspace{0.5cm}x\geq0,\,t\geq0\text{,}
\end{equation}
is a solution to Problem P, which satisfies (\ref{th_lim-In:u}).
\end{proof}

\begin{theorem}\label{th:lim-Sv}
Let $\Phi$, $h$ and $F$ defined by:
\begin{equation}\label{th_lim-Sv:h-Phi-F}
\Phi(x)=\lambda X(x),\,\,h(x)=\eta X(x),\,x>0\hspace{0.5cm}\text{and}\hspace{0.5cm}F=F(\delta T(t)),\,t>0\text{,}
\end{equation}
where $X$ is the function given by (\ref{th_Sv:X}):
\begin{equation*}
X(x)=\left\{\begin{array}{ccc}
             \frac{\delta}{\sqrt{\sigma}}\sinh{(\sqrt{\sigma}x)} & &\text{if }\sigma>0\\
             \frac{\delta}{\sqrt{|\sigma|}}\sin{(\sqrt{|\sigma|}x)}& & \text{if }\sigma<0\\
			 \delta x& &\text{if }\sigma=0
            \end{array}\right.,\hspace{0.5cm}x>0
\end{equation*}
and $T$ is the solution of the initial value problem (\ref{th_Sv:T})-(\ref{th_Sv:TT}):
\begin{align*}
&\dot{T}(t)-\sigma T(t)=-\lambda F(\delta T(t),t),\hspace{0.5cm}t>0\\
&T(0)=\eta \text{,}
\end{align*}
with $\lambda,\,\eta,\,\delta\in\mathbb{R}-\{0\}$.

For the solution $u_0$ to Problem P$_0$ given in (\ref{u_0}), we have:
\begin{equation}\label{th_lim-Sv:u_0}
\displaystyle\lim_{t\to+\infty}u_0(x,t)=\left\{\begin{array}{ccc}
                                             h(x)& &\text{if }\sigma=0\\
                                             \infty& &\text{if }\sigma> 0\\
                                             0& &\text{if }\sigma< 0\\
                                             \end{array}\right.\text{,}\hspace{0.5cm}\forall\,x>0\text{.}
\end{equation}
Furthermore:
\begin{enumerate}
\item If $F$ is defined by:
\begin{equation}\label{th_lim-Sv:F}
F(V)=\nu V,\hspace{0.5cm}V\in\mathbb{R}\text{,}
\end{equation}
for some $\nu\in\mathbb{R}-\{0\}$, then there exists a solution $u$ to Problem P which satisfies:
\begin{equation}\label{th_lim-Sv:u}
\displaystyle\lim_{t\to+\infty}u(x,t)=\left\{\begin{array}{ccccc}
                                             h(x)& &\text{if }\gamma=\sigma& & \\
                                             \infty& &\text{if }\gamma<\sigma&\text{,} &\forall\,x>0 \text{ if } \gamma\geq\sigma\\
                                             0& &\text{if }\gamma>\sigma& &\forall\,x>0\,/\,h(x)\neq0 \text{ if } \gamma<\sigma
                                             \end{array}\right.\text{,}
\end{equation}
being $\gamma=\lambda\nu\delta$.

Therefore,
\begin{equation}\label{th_lim-Sv:u/u_0}
\displaystyle\lim_{t\to+\infty}\frac{u(x,t)}{u_0(x,t)}=
\left\{\begin{array}{ccccc}
        \infty& &\text{if }\gamma<0&\text{,} &\forall\,x>0\text{ if }\sigma\geq0\\
        0& &\text{if }\gamma>0& &\forall\,x>0\,/\,h(x)\neq0\text{ if }\sigma<0
       \end{array}\right.\text{.}
\end{equation}
\item If $F$ is defined by:
\begin{equation}\label{th_lim-Sv:FF}
F(V)=\nu V^n,\hspace{0.5cm}V\in\mathbb{R}\text{,}
\end{equation}
for some $\nu>0$ and $n<1$, and we consider $\lambda>0$, $\eta>0$ and $\delta>0$, then there exists a solution $u$ to Problem P which satisfies:
\begin{equation}\label{th_lim-Sv:uu}
\displaystyle\lim_{t\to+\infty}u(x,t)=\left\{\begin{array}{ccccc}
                                             0& &\text{if }\sigma< 0 \vee 0<n<1& &\forall\,x>0\text{ if }\sigma<0\,\wedge\,0\leq n<1\\
                                             \theta_1h(x)& &\text{if }\sigma<0 \wedge n=0&\text{,} &\forall\,x>0\,/\,h(x)\neq0\text{ if }\\
                                             \infty& &\text{if }\sigma\geq 0 \vee (\sigma<0 \wedge n<0)& &\sigma\geq0\,\vee\,(n<0\,\wedge\,\sigma<0)
                                             \end{array}\right.\text{,}
\end{equation}
where $\theta_1=\frac{\lambda\mu}{\sigma\eta}$.

Therefore,
\begin{equation}\label{th_lim-Sv:uu/uu_0}
\displaystyle\lim_{t\to+\infty}\frac{u(x,t)}{u_0(x,t)}=
\left\{\begin{array}{ccccc}
        \infty& &\text{if }\sigma\leq0&\text{,} &\forall\,x>0\text{ if }\sigma\geq0\\
        1-\frac{\lambda\mu\delta^n}{\sigma\eta^{1-n}}& &\text{if }\sigma>0& &\forall\,x>0\,/\,h(x)\neq0\text{ if }\sigma<0
       \end{array}\right.\text{.}
\end{equation}
\end{enumerate}
\end{theorem}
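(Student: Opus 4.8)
The plan is to reduce every assertion to the scalar initial value problem (\ref{th_Sv:T})--(\ref{th_Sv:TT}) via the separated representation $u=X(x)T(t)$ of Theorem \ref{th:Sv}, after first pinning down $u_0$ in closed form; all the asymptotics then come from inspecting explicit exponentials. For $u_0$ I would avoid evaluating the Green-function integral (\ref{u_0}) directly and instead verify that $w(x,t)=\eta X(x)e^{\sigma t}$ solves Problem P$_0$: since $X''=\sigma X$ and $X(0)=0$ (the Remark following Theorem \ref{th:Sv}), one checks $w_t-w_{xx}=\eta(\sigma-\sigma)X(x)e^{\sigma t}=0$, $w(0,t)=0$ and $w(x,0)=\eta X(x)=h(x)$; because $X$ is $\sinh$, $\sin$ or linear it is of admissible (sub-Gaussian) growth in $x$, so $w$ lies in a uniqueness class for the heat equation and $u_0=w=h(x)e^{\sigma t}$. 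Sending $t\to+\infty$ then yields $h(x)$, $\infty$ or $0$ according to $\sigma=0$, $\sigma>0$ or $\sigma<0$, which is (\ref{th_lim-Sv:u_0}).

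For part 1, with $F(V)=\nu V$ as in (\ref{th_lim-Sv:F}), Theorem \ref{th:Sv} together with Proposition \ref{pr:Sv}(1) gives $T(t)=\eta e^{(\sigma-\gamma)t}$ with $\gamma=\lambda\nu\delta$, hence $u(x,t)=X(x)T(t)=h(x)e^{(\sigma-\gamma)t}$. The trichotomy (\ref{th_lim-Sv:u}) is immediate from the sign of $\sigma-\gamma$ (limit $h(x)$, $\infty$, or $0$ when $\gamma=\sigma$, $\gamma<\sigma$, $\gamma>\sigma$), the divergent and vanishing cases being read off pointwise for $x$ with $h(x)\neq0$. Dividing by $u_0=h(x)e^{\sigma t}$ at such $x$ collapses the quotient to the clean form $u(x,t)/u_0(x,t)=e^{-\gamma t}$, whose limit is $+\infty$ if $\gamma<0$ and $0$ if $\gamma>0$, giving (\ref{th_lim-Sv:u/u_0}).

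For part 2, with $F(V)=\nu V^n$ as in (\ref{th_lim-Sv:FF}), the IVP (\ref{th_Sv:T})--(\ref{th_Sv:TT}) becomes the Bernoulli equation $\dot T-\sigma T=-\lambda\nu\delta^n T^n$; this is the $f\equiv\nu$ instance of Proposition \ref{pr:Sv}(3), so $T(t)=g(t)e^{\sigma t}$ with $W:=T^{1-n}=A+(\eta^{1-n}-A)e^{(1-n)\sigma t}$, where $A=\lambda\nu\delta^n/\sigma$ when $\sigma\neq0$, and $W=\eta^{1-n}-(1-n)\lambda\nu\delta^n t$ when $\sigma=0$. I would then push the limit of $u=X(x)T(t)=(h(x)/\eta)\,W^{1/(1-n)}e^{\sigma t}$ through a case analysis on the signs of $\sigma$ and $n$: the sign of the exponent $(1-n)\sigma$ decides whether the exponential term in $W$ dominates or vanishes, the sign of $A$ (that of $\sigma$) fixes the surviving constant, and for $n=0$ the equation is linear, so $W=T\to A=\lambda\nu/\sigma$ and $u\to\theta_1 h(x)=\frac{\lambda\nu}{\sigma\eta}h(x)$. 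For the quotient, at points with $h(x)\neq0$ one has $u/u_0=T(t)/(\eta e^{\sigma t})=g(t)/\eta$, so (\ref{th_lim-Sv:uu/uu_0}) follows by evaluating $\lim_{t\to+\infty}g(t)/\eta$ in the same cases.

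The main obstacle is precisely this final case analysis. For $n<1$ the Bernoulli solution is genuine only while $W(t)>0$, and $W$ may vanish in finite time (finite-time extinction of $T$) whenever the constant $A$ and the initial value $\eta^{1-n}$ lie on opposite sides of the flow; determining in each regime whether $W$ tends to a positive limit, to $0$, or to $+\infty$, and matching these to $u\to0$, $u\to\theta_1 h(x)$, or $u\to\infty$, is where all the care is required. This must be combined with the separate treatment of $\sigma=0$ (linear decay of $W$) and with the restriction to $x$ satisfying $h(x)\neq0$, needed so that $u_0\neq0$ and the quotient in (\ref{th_lim-Sv:uu/uu_0}) is well defined.
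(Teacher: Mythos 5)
Your treatment of $u_0$ and of part 1 is correct and essentially follows the paper's route: the paper evaluates the Green-function integral (\ref{u_0}) to obtain $u_0(x,t)=h(x)e^{\sigma t}$ (misprinted there as $e^{|\sigma|t}$, an error your verification-plus-uniqueness argument quietly fixes, and which is the only way to be consistent with the limits (\ref{th_lim-Sv:u_0})), and part 1 in both cases reduces to the explicit exponential $u=h(x)e^{(\sigma-\gamma)t}$ and the quotient $e^{-\gamma t}$, with the correct restriction to points where $h(x)\neq0$. The genuine gap is part 2. You set up the Bernoulli reduction correctly, namely $W=T^{1-n}=A+(\eta^{1-n}-A)e^{(1-n)\sigma t}$ with $A=\lambda\nu\delta^{n}/\sigma$ for $\sigma\neq0$, and $W=\eta^{1-n}-(1-n)\lambda\nu\delta^{n}t$ for $\sigma=0$, but you then declare the decisive case analysis to be ``the main obstacle\dots where all the care is required'' and never carry it out. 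That case analysis \emph{is} the content of (\ref{th_lim-Sv:uu}) and (\ref{th_lim-Sv:uu/uu_0}); a proof cannot end by naming the step that remains to be done.

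Moreover, the difficulties you flag are not mere bookkeeping: they change the conclusions, so the limit cannot simply be ``pushed through''. From your own formula, for $\sigma>0$ and $\eta^{1-n}>A$ one gets $u/u_0=g(t)/\eta\to\left(1-\lambda\nu\delta^{n}/(\sigma\eta^{1-n})\right)^{1/(1-n)}$, which agrees with the constant claimed in (\ref{th_lim-Sv:uu/uu_0}) only when $n=0$ (the paper's $\mu$ should read $\nu$, and its claimed constant lacks the exponent $1/(1-n)$); for $\sigma>0$ with $\eta^{1-n}=A$ the solution is the equilibrium $T\equiv\eta$, so $u\to h(x)$ rather than $\infty$; and for $\sigma=0$ with $0<n<1$ the function $W$ vanishes in finite time (extinction), so $u$ does not tend to $\infty$ there either. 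Completing your plan therefore forces either additional restrictions on the parameters or corrected statements of the limits. For comparison, the paper's own proof of part 2 is the single sentence that it ``follows in the same manner'' as part 1, so your setup coincides with the paper's intended argument --- but your blind attempt stops exactly where the nontrivial work (and, as your own observations show, the trouble with the statement itself) begins.
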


\begin{proof}
By computing the integral in (\ref{u_0}) for the function $h$ given in (\ref{th_lim-Sv:h-Phi-F}), we obtain that the solution $u_0$ to Problem P$_0$ given in (\ref{u_0}) is defined by:
\begin{equation}\label{th_lim-Sv:uuu_0}
u_0(x,t)=h(x)\exp{(|\sigma|t)},\hspace{0.5cm}\forall\,x>0,\,t>0\text{.}
\end{equation}
By taking the limit when $t$ tends to $+\infty$, we have (\ref{th_lim-Sv:u_0}).
\begin{enumerate}
\item Since $\Phi$, $h$ and $F$ are under the hypothesis of Corollary \ref{pr:Sv}, we know that the function $u$ given by:
\begin{equation}\label{th_lim-Sv:uuu}
u(x,t)=\eta X(x)\exp{((\sigma-\gamma)t)},\hspace{0.5cm}\forall\,x>0,\,t>0
\end{equation}
is a solution to Problem P, which satisfies (\ref{th_lim-Sv:u}).
Finally, the proof of (\ref{th_lim-Sv:u/u_0}) follows straightforward by computing the limit from the explicit expressions of $u_0$ and $u$ given in (\ref{th_lim-Sv:uuu_0}) and (\ref{th_lim-Sv:uuu}).
\item It follows in the same manner that the proof of the previous item.
\end{enumerate}
\end{proof}

We see from the previous theorem that we can control the Problem P through the parameters involved in the definition of the source term $-\Phi F$. When $F(V)=\nu V$, we can increase ($\gamma<0<\sigma$) or decrease ($0<\gamma<\sigma$) the velocity of convergence to $\infty$ for $u$ respect to the velocity of convergence for $u_0$. We also can stabilize the problem by doing $u$ tending to a constant value ($0<\sigma\leq\gamma$) when $u_0$ is going to $\infty$. When $F(V)=\nu V^n$, we can decrease ($\sigma>0$ and $1=\frac{\lambda\mu\delta^n}{\sigma\eta^{1-n}}$) or maintein ($\sigma>0$ and $1\neq\frac{\lambda\mu\delta^n}{\sigma\eta^{1-n}}$) the velocity of convergence to $\infty$ for $u$ respect to the velocity of convergence for $u_0$. We also can decrease the velocity of convergence to 0 for $u$ respect to the velocity of convergence for $u_0$ ($\sigma<0$).

\begin{theorem}\label{th:lim-Ir}
Let $\Phi$ defined by one of the expressions given in (\ref{def:phi123}):
\begin{equation*}
\varphi_1(x)=\lambda x,\hspace{0.5cm}\varphi_2(x)=-\mu\sinh{(\lambda x)}\hspace{0.5cm}\text{or}\hspace{0.5cm}\varphi_3(x)=-\mu\sin{(\lambda x)},\hspace{0.5cm}x>0\text{,}
\end{equation*}
where $\lambda>0$ and $\mu>0$, $F$ defined by:
\begin{equation}
F=F(V)=\nu V,\hspace{0.5cm} V\in\mathbb{R}\text{,}
\end{equation}
for some $\nu>0$ and $h$ defined as in (\ref{def:h}):
\begin{equation*}
h(x)=\eta x^m,\hspace{0.5cm}x>0\text{,}
\end{equation*}
where $\eta\in\mathbb{R}-\{0\}$ and $m$ is an odd number given by $m=2p+1$, with $p\in\mathbb{N}_0$.

For the solution $u_0$ to Problem P$_0$ given in (\ref{u_0}), we have:
\begin{equation}\label{th_lim-Ir:u_0}
\displaystyle\lim_{t\to+\infty}u_0(x,t)=\left\{\begin{array}{ccc}
                                             h(x)& &\text{if }m=1\\
                                             \infty& &\text{if }m>1
                                             \end{array}\right.\text{,}\hspace{0.5cm}\forall\,x>0\text{.}
\end{equation}
Furthermore:
\begin{enumerate}
\item If $\Phi=\varphi_1$, then there exists a solution $u$ to Problem P which satisfies:
\begin{equation}
\displaystyle\lim_{t\to+\infty}u(x,t)=\left\{\begin{array}{ccc}
                                             0& &\text{if }m=1\\
                                             \infty& &\text{if }m>1
                                             \end{array}\right.\text{,}\hspace{0.5cm}\forall\,x>0\text{.}
\end{equation}
Therefore,
\begin{equation}
\displaystyle\lim_{t\to+\infty}\frac{u(x,t)}{u_0(x,t)}=\left
                                             \{\begin{array}{ccc}
                                             0& &\text{if }m=1\\
                                             r(x)& &\text{if }m>1
                                             \end{array}\right.\text{,}\hspace{0.5cm}\forall\,x>0\text{,}
\end{equation}
being $r(x)$ is a rational function in the variable $x$.
\item If $\Phi=\varphi_2$, then there exists a solution $u$ to Problem P which satisfies:
\begin{equation}
\displaystyle\lim_{t\to+\infty}u(x,t)=\infty\text{,}\hspace{0.5cm}\forall\,x>0\text{.}
\end{equation}
Therefore,
\begin{equation}
\displaystyle\lim_{t\to+\infty}\frac{u(x,t)}{u_0(x,t)}=\infty\text{,}\hspace{0.5cm}\forall\,x>0\text{.}
\end{equation}
\item If $\Phi=\varphi_3$, then there exists a solution $u$ to Problem P which satisfies:
\begin{equation}
\displaystyle\lim_{t\to+\infty}u(x,t)=\infty\text{,}\hspace{3.8cm}\forall\,x>0\text{.}
\end{equation}
Therefore,
\begin{equation}
\displaystyle\lim_{t\to+\infty}\frac{u(x,t)}{u_0(x,t)}=\left\{
                                             \begin{array}{ccc}
                                             r(x)& &\text{if }\delta>0\\
                                             \infty& &\text{if }\delta\leq 0
                                             \end{array}\right.\text{,}\hspace{0.5cm}\forall\,x>0\text{,}
\end{equation}
where $r(x)$ is a rational function in the variable $x$.
\end{enumerate}
\end{theorem}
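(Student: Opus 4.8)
The plan is to reduce everything to limits of the explicit elementary expressions already produced in Section~\ref{ss:Ir}. Indeed, the source-free solution $u_0$ is given by the Green-function integral (\ref{u_0}), which for $h(x)=\eta x^m$ is exactly the polynomial part appearing in (\ref{pr_Iru1:u0}); and for each $\Phi=\varphi_i$ the full solution $u$ together with its boundary flux $V(t)=u_x(0,t)$ has been computed in Propositions~\ref{pr:Iru1}--\ref{pr:Iru3} and Corollaries~\ref{co:IrV1}--\ref{co:IrV3}. So no new representation is needed: the whole theorem is a matter of reading off dominant terms as $t\to+\infty$ and then dividing.

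First I would settle $u_0$. Evaluating the Gaussian moments $\int_0^{+\infty}G(x,t,\xi,0)\,\eta\xi^m\,d\xi$ (equivalently, quoting (\ref{pr_Iru1:u0})) shows that $u_0(x,t)=\frac{\eta}{\sqrt{\pi}}\sum_{k=0}^{p}\binom{m}{2k}\Gamma\!\left(\frac{2k+1}{2}\right)(4t)^k x^{m-2k}$ is a polynomial in $t$ of degree $p$, whose top coefficient is proportional to $x$ (since $m-2p=1$). Hence when $p=0$ (that is $m=1$) we have $u_0\equiv h$ and the limit is $h(x)$, while for $p\ge1$ (that is $m>1$) the $t^p$-term forces $u_0\to\infty$; this is (\ref{th_lim-Ir:u_0}).

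Then I would treat the three choices of $\Phi$ in turn, in each case inserting the explicit $u$ and $V$, performing the remaining elementary $t$-integration, and extracting the leading behaviour. For $\Phi=\varphi_1$ the flux $V$ from Corollary~\ref{co:IrV1} is a polynomial plus the decaying $e^{-\nu\lambda t}$, so $u=u_0-\nu\varphi_1(x)\int_0^t V\,d\tau$ is polynomial in $t$ up to exponentially small terms; the point is that $\varphi_1(x)=\lambda x$ is proportional to the top-degree-in-$t$ coefficient of $u_0$, so the $t^p$-terms cancel and $u$ has strictly smaller $t$-degree than $u_0$, which yields the stated ratio. For $\Phi=\varphi_2$ one has $\sigma=\lambda+\nu\mu>0$, so $V$ from Corollary~\ref{co:IrV2} carries the growing factor $e^{\lambda\sigma t}$ with $\lambda\sigma>\lambda^2$; after the weighting $e^{\lambda^2t}\int_0^t V\,e^{-\lambda^2\tau}d\tau$ this produces a term of exact order $e^{\lambda\sigma t}$ that dominates the polynomial $u_0$, giving $u\to\infty$ and $u/u_0\to\infty$. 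For $\Phi=\varphi_3$ the sign of $\delta=\lambda-\nu\mu$ is decisive: the flux from Corollary~\ref{co:IrV3} contains $e^{-\lambda\delta t}$, so when $\delta\le0$ this grows and, after the weight $e^{-\lambda^2t}\int_0^t V\,e^{\lambda^2\tau}d\tau$, beats $u_0$ (whence $u/u_0\to\infty$), whereas when $\delta>0$ it decays and the surviving polynomial part controls both $u$ and $u_0$, so the ratio tends to the finite function $r(x)$.

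The main obstacle is the asymptotic bookkeeping rather than any conceptual difficulty: one must track leading coefficients precisely enough to detect the cancellation in the $\varphi_1$ case (using $\binom{2p+1}{2p}=m$, $4^p=2^{m-1}$, together with the value (\ref{pr_IrV1:c}) of $c$ and the value of $c_{1,m}$) and, in the $\varphi_2,\varphi_3$ cases, to compare the competing exponential rates $\lambda\sigma$, $\lambda\delta$ and $\lambda^2$ and to check that the dominant exponential of the correction is not annihilated by $u_0$ (it cannot be, since $u_0$ is purely polynomial). Once the dominant term of numerator and denominator is identified in each regime, forming $u/u_0$ and letting $t\to+\infty$ is immediate; the limits of the flux recorded in Corollaries~\ref{co:IrVinf1}--\ref{co:IrVinf3} provide a useful consistency check throughout.
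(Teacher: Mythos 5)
Your overall plan --- read $u_0$, $u$ and the flux $V$ off the explicit formulas of Section \ref{ss:Ir} and take limits --- is exactly the paper's proof, which consists of a single sentence referring back to the method of Theorems \ref{th:lim-In} and \ref{th:lim-Sv}; your treatment of $u_0$ and of the case $\Phi=\varphi_2$ is correct. The genuine problem is in the case $\Phi=\varphi_1$, where your own (correct) cancellation claim contradicts the conclusion you then assert. From the Volterra equation (\ref{eq:V}) with $R\equiv\lambda$ one has $\nu\lambda\int_0^tV\,d\tau=V_0(t)-V(t)$, so the solution of Proposition \ref{pr:Iru1} can be rewritten as
\[
u(x,t)=u_0(x,t)-x\,V_0(t)+x\,V(t),\qquad V_0(t)=c\,t^{p},
\]
and, as you note, $x\,V_0(t)=c\,x\,t^{p}$ is exactly the top term of $u_0$, while $V$ has $t$-degree $p-1$; hence the $t$-degree of $u$ drops to $p-1$. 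But then $u/u_0\to 0$ for every $m>1$ (so the ``stated ratio'' $r$ could only be $r\equiv 0$), and for $m=3$ (i.e.\ $p=1$) the solution $u$ is bounded: explicitly $V(t)=\frac{6\eta}{\nu\lambda}\left(1-e^{-\nu\lambda t}\right)$ and $u(x,t)=\eta x^{3}+\frac{6\eta x}{\nu\lambda}\left(1-e^{-\nu\lambda t}\right)$, which one checks directly solves Problem P and tends to the finite limit $\eta x^{3}+\frac{6\eta x}{\nu\lambda}$. This flatly contradicts the claim $\lim_{t\to+\infty}u(x,t)=\infty$ for $m>1$ --- a claim your sketch never actually addresses. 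So ``which yields the stated ratio'' is the step that fails: executed honestly, your argument disproves item 1 as stated (for the explicit solution being used) rather than proving it; you needed either to locate an error in the cancellation (there is none) or to flag that the theorem requires correction.

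The same slippage occurs in item 3. For $\delta>0$ and $m=1$ the weighted integral $e^{-\lambda^{2}t}\int_0^tV(\tau)e^{\lambda^{2}\tau}d\tau$ converges to $\frac{\eta}{\delta\lambda}$, so $u(x,t)\to\eta x+\frac{\eta\nu\mu}{\delta\lambda}\sin(\lambda x)$, again finite, against the blanket claim $\lim_{t\to+\infty}u=\infty$; and for $m>1$, $\delta>0$ the finite limit of $u/u_0$ that you correctly predict equals $1+\frac{\nu\mu\sin(\lambda x)}{\delta\lambda x}$, which is not a rational function of $x$. None of this can be repaired by appeal to the paper, whose proof contains no computations; a proof must either derive the stated conclusions or identify them as false, and your proposal does neither for items 1 and 3.
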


\begin{proof}
It follows in the same manner that the proofs of Theorems \ref{th:lim-In} and \ref{th:lim-Sv}.
\end{proof}

From the previous theorem, we see again that there exist several cases where we can control the Problem P through the source term $-\Phi F$.

\section{Explicit solutions for Problem $\widetilde{\text{P}}$}\label{s:EsPP}
The following theorem states a relationship between the Problem P and the Problem $\widetilde{\text{P}}$ given in (\ref{eq:v})-(\ref{bc:v}), and it was proved in \cite{TaVi1998}.

\begin{theorem}\label{th:PP}
If $u$ is a solution to Problem P where $h$ and $\Phi$ are differentiable functions in $\mathbb{R}^+$, then the function $v$ defined by:
\begin{equation}
v(x,t)=u_x(x,t),\hspace{0.5cm}x\geq0,\,t\geq 0
\end{equation}
is a solution to Problem $\widetilde{\text{P}}$ when $\widetilde{F}$, $\widetilde{\Phi}$, $\tilde{h}$ and $\tilde{g}$ are defined by:
\begin{equation}
\begin{split}
&\widetilde{F}(V,t)=F(V,t),\hspace{0.25cm} V>0,\,t>0,\hspace{1cm}\tilde{g}(t)=\Phi(0)F(u_x(0,t),t),\hspace{0.25cm}t>0\text{,}\\
&\widetilde{\Phi}(x)=\Phi'(x),\hspace{0.25cm}x>0,\hspace{2.9cm}\tilde{h}(x)=h'(x),\hspace{0.25cm} x>0\text{.}
\end{split}
\end{equation}
\end{theorem}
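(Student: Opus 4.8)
The plan is to verify directly that the function $v = u_x$ satisfies each of the three defining relations of Problem $\widetilde{\text{P}}$ — the governing equation \eqref{eq:v}, the initial condition \eqref{ic:v}, and the boundary condition \eqref{bc:v} — starting from the corresponding relations for $u$ and the stated definitions $\widetilde{F} = F$, $\widetilde{\Phi} = \Phi'$, $\tilde{h} = h'$ and $\tilde{g}(t) = \Phi(0)F(u_x(0,t),t)$. Throughout I would assume that $u$ is smooth enough in the interior $x>0,\,t>0$ for the mixed partial derivatives to commute and for the spatial differentiation of the equation to be performed term by term; such regularity is standard for solutions of the heat equation and is compatible with the hypothesis that $h$ and $\Phi$ are differentiable.

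First I would treat the governing equation. Differentiating $u_t - u_{xx} = -\Phi(x)F(u_x(0,t),t)$ with respect to $x$, and observing that the factor $F(u_x(0,t),t)$ depends on $t$ alone (the boundary flux $u_x(0,t)$ is a function of $t$ only), the right-hand side becomes $-\Phi'(x)F(u_x(0,t),t)$, while the left-hand side becomes $u_{tx} - u_{xxx} = (u_x)_t - (u_x)_{xx} = v_t - v_{xx}$. Using $\widetilde{\Phi} = \Phi'$, the identity $v(0,t) = u_x(0,t)$, and $\widetilde{F} = F$, this is exactly $v_t - v_{xx} = -\widetilde{\Phi}(x)\widetilde{F}(v(0,t),t)$, which is \eqref{eq:v}.

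Next I would dispatch the initial and boundary conditions. Differentiating $u(x,0) = h(x)$ in $x$ gives $v(x,0) = u_x(x,0) = h'(x) = \tilde{h}(x)$, establishing \eqref{ic:v}. For the boundary condition I would evaluate the governing equation of Problem P at $x=0$ and use the homogeneous boundary data $u(0,t)=0$: since this holds for all $t$, differentiation in $t$ yields $u_t(0,t)=0$, so the equation collapses to $u_{xx}(0,t) = \Phi(0)F(u_x(0,t),t)$. As $v_x(0,t) = u_{xx}(0,t)$, this reads $v_x(0,t) = \Phi(0)F(u_x(0,t),t) = \tilde{g}(t)$, which is \eqref{bc:v}.

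I expect the boundary step to be the only delicate point, since it requires the governing equation of Problem P to hold up to the face $x=0$ so that the trace $u_{xx}(0,t)$ is well defined and the substitution $u_t(0,t)=0$ is legitimate. I would justify this by extending the equation continuously to $x=0$ using the interior smoothness of $u$ together with the differentiability of $\Phi$ and $h$; once this trace regularity is granted, the remaining verifications are the routine manipulations indicated above.
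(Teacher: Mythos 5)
Your proof is correct and takes essentially the same route as the paper's source: the paper does not reprove this theorem but defers to \cite{TaVi1998}, where the argument is precisely the one you give --- differentiate the PDE in $x$ to obtain the equation for $v=u_x$ with $\widetilde{\Phi}=\Phi'$ and $\widetilde{F}(v(0,t),t)=F(u_x(0,t),t)$, differentiate the initial data to get $\tilde h=h'$, and recover the Neumann condition by evaluating the equation at $x=0$ and using $u_t(0,t)=0$. Your explicit flagging of the trace regularity needed to evaluate the equation at the face $x=0$ is a point the statement (and the cited proof) leaves implicit, and it is the right place to be careful.
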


We end this section by giving explicit solutions for some particular cases of Problem $\widetilde{\text{P}}$.

\begin{proposition}\label{pr:PP}
Let $\tilde{g}$ the zero function and:
\begin{enumerate}
\item
\begin{enumerate}
\item $\widetilde{F}$ the zero function and $\tilde{h}$ a constant function, or
\item $\widetilde{F}$ a constant function defined by:
\begin{equation}
\widetilde{F}(V,t)=k,\hspace{0.5cm}V\in\mathbb{R},\,t>0\text{,}
\end{equation}
for some $k\in\mathbb{R}-\{0\}$, $\widetilde{\Phi}$ a locally integrable function in $\mathbb{R}^+$ and $\tilde{h}$ a differentiable function such that:
\begin{equation}
\tilde{h}(x)=k\displaystyle\int_0^x\widetilde{\Phi}(\xi)d\xi,\hspace{0.5cm}x>0\text{.}
\end{equation}
Then the function $v$ defined by:
\begin{equation}
v(x,t)=\tilde{h}(x),\hspace{0.5cm}x\geq0,\,t\geq0
\end{equation}
is a solution to Problem $\widetilde{\text{P}}$ independent of the temporal variable $t$.
\end{enumerate}
\item $\widetilde{F}$ given by (\ref{pr_Sv:F}), (\ref{pr_Sv:FF}) or (\ref{pr_Sv:FFF}), that is:
\begin{enumerate}
\item[]$F(V,t)=\nu V,\hspace{0.5cm}V\in\mathbb{R},\,t>0,\hspace{0.25cm}$with $\nu\in\mathbb{R}-\{0\}$,
\item[]$F(V,t)=f_1(t)+f_2(t)V,\hspace{0.5cm}V\in\mathbb{R},\,t>0,\hspace{0.25cm}$with $f_1, f_2\in L^1_{loc}(\mathbb{R}^+)$, or
\item[]$F(V,t)=V^nf(t),\hspace{0.5cm}V\in\mathbb{R},\,t>0,\hspace{0.25cm}$with $n<1, f\in L^1_{\text{loc}}(\mathbb{R}^+), f>0$ and $\lambda, \delta, \eta>0$,
\end{enumerate}
and $\tilde{h}$ and $\widetilde{\Phi}$ defined by:
\begin{equation}
\tilde{h}(x)=\eta \widetilde{X}(x)\hspace{0.5cm}\text{ and }\hspace{0.5cm}\widetilde{\Phi}(x)=\lambda \widetilde{X}(x),\hspace{0.5cm}t>0\text{,}
\end{equation}
where $\widetilde{X}$ is given by:
\begin{equation}
\widetilde{X}(x)=\left\{\begin{array}{ccc}
                    \delta\cosh{(\sqrt{\sigma})x}& &\text{if }\sigma>0\\
                    \delta\cos{(\sqrt{|\sigma|})x}& &\text{if }\sigma<0\\
                    \delta& &\text{if }\sigma=0\\
                    \end{array}\right.,\hspace{0.5cm}x>0\text{,}
\end{equation}
$\lambda,\, \eta,\, \delta\in\mathbb{R}-\{0\}$.

Then the function $v$ defined by:
\begin{equation}
v(x,t)=\widetilde{X}(x)\widetilde{T}(t),\hspace{0.5cm}x\geq0,\,t\geq0
\end{equation}
is a solution with separated variables to Problem $\widetilde{\text{P}}$, where $\widetilde{T}$ the solution of the initial value problem (\ref{th_Sv:T})-(\ref{th_Sv:TT}).
\item $\widetilde{F}$ defined as in (\ref{th_Ir:F}):
\begin{equation*}
F=F(V,t)=\nu V,\hspace{0.5cm} V\in\mathbb{R},\,t>0\text{,}
\end{equation*}
for some $\nu>0$, $\tilde{h}$ defined as:
\begin{equation}
\tilde{h}(x)=\tilde{\eta}x^l,\hspace{0.5cm}x>0\text{,}
\end{equation}
for some $\tilde{\eta}\in\mathbb{R}-\{0\}$ and $l\geq 0$, and $\widetilde{\Phi}$ given by one of the following expressions:
\begin{equation}
\widetilde{\varphi}_1(x)=\tilde{\lambda}x,\hspace{0.5cm}\varphi_2(x)=-\tilde{\mu}\cosh{(\tilde{\lambda}x)}\hspace{0.5cm}\text{or}\hspace{0.5cm}\varphi_3(x)=-\tilde{\mu}\cos{(\tilde{\lambda}x)},\hspace{0.5cm}x>0\text{,}
\end{equation}
for some $\tilde{\lambda}>0$ and $\tilde{\nu}>0$. Then the function $v$ defined by:
\begin{equation}
v(x,t)=u_x(x,t),\hspace{0.5cm}x\geq0,\,t\geq0
\end{equation}
is a solution to Problem $\widetilde{\text{P}}$, where $u$ is given by (\ref{pr_Iru1:u}) if $\widetilde{\Phi}=\tilde{\varphi}_1$, by  (\ref{pr_Iru2:u}) if $\widetilde{\Phi}=\tilde{\varphi}_2$ or by (\ref{pr_Iru3:u}) if $\widetilde{\Phi}=\tilde{\varphi}_3$.
\end{enumerate}
\end{proposition}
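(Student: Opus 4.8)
The plan is to handle all three items through the single device provided by Theorem~\ref{th:PP}, which converts every explicit solution $u$ of Problem~P from Section~\ref{s:EsP} into a solution $v=u_x$ of Problem~$\widetilde{\text{P}}$, with induced data $\widetilde{\Phi}=\Phi'$, $\tilde h=h'$, $\widetilde F=F$ and $\tilde g(t)=\Phi(0)F(u_x(0,t),t)$. For each item I would first identify the Problem~P whose $x$-derivative produces the prescribed $\widetilde{\text{P}}$, and then confirm that the induced data match those in the statement. The three items mirror the three subsections of Section~\ref{s:EsP}: item~1 draws on the stationary solutions of Theorem~\ref{th:In}, item~2 on the separated solution of Theorem~\ref{th:Sv} together with Proposition~\ref{pr:Sv}, and item~3 on the integral-representation solutions of Propositions~\ref{pr:Iru1}, \ref{pr:Iru2} and \ref{pr:Iru3}.

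For items~1 and 2 the candidate $v$ is already given in closed form, so I would verify the three defining conditions of Problem~$\widetilde{\text{P}}$ directly, which is equivalent to the correspondence above. In item~1 the function $v$ is independent of $t$, so $v_t=0$ and the heat equation reduces to an algebraic identity relating $\tilde h''$ to $\widetilde{\Phi}\,\widetilde F(\tilde h(0),\cdot)$, which one checks from the definitions of $\tilde h$ and $\widetilde F$ (the zero/constant subcase being immediate); the initial condition holds by construction and the flux condition reduces to $v_x(0,t)=0=\tilde g(t)$. In item~2 the key algebraic fact is $\widetilde X=X'$ relating the two profiles: differentiating $u(x,t)=X(x)T(t)$ gives $v=u_x=X'(x)T(t)=\widetilde X(x)\widetilde T(t)$ with $\widetilde T=T$, so the time factor still solves the initial value problem (\ref{th_Sv:T})--(\ref{th_Sv:TT}) for each of the three admissible $\widetilde F$; the initial and boundary conditions then follow from $\tilde h=\eta\widetilde X=\eta X'=h'$ and from $\widetilde X(0)=X'(0)$.

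For item~3 I would invoke Theorem~\ref{th:PP} directly on Propositions~\ref{pr:Iru1}--\ref{pr:Iru3}: setting $v=u_x$ with $u$ the corresponding integral-representation solution, the induced data are read off as $\tilde h=h'$ (so the powers are related by $m=l+1$, with $\eta$ fixed by $\tilde\eta$ and $l$), $\widetilde\Phi=\Phi'$ (which carries $\varphi_2,\varphi_3$ to the prescribed $\cosh$/$\cos$ profiles after identifying the new constants $\tilde\mu,\tilde\lambda$ with $\mu\lambda,\lambda$), and $\widetilde F=F=\nu V$. No new computation is needed beyond this identification, since the expressions for $u$ and $u_x(0,t)$ are already explicit in the cited results.

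The main obstacle throughout is matching the induced Neumann datum: one must confirm that $\tilde g(t)=\Phi(0)F(u_x(0,t),t)$ coincides with the prescribed zero function. This is exactly where the hypotheses are used. In the cases coming from the separated and integral representations each admissible profile vanishes at the origin ($X(0)=0$ for item~2 and $\varphi_i(0)=0$ for item~3), so $\Phi(0)=0$ forces $\tilde g\equiv0$; in the degenerate subcase of item~1 one instead has $\widetilde F\equiv0$, and in the constant subcase $\tilde h(0)=0$ makes the flux condition $\tilde h'(0)=0$ the thing to verify. A secondary, routine point is that the differentiability hypotheses of Theorem~\ref{th:PP}, namely that $h$ and $\Phi$ be differentiable on $\mathbb{R}^+$, hold for all the polynomial, hyperbolic and trigonometric data employed, which is clear by inspection.
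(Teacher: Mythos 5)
Your overall strategy coincides with the paper's own proof, which is literally ``apply Theorem \ref{th:PP} to the explicit solutions of Section \ref{s:EsP}'', and your execution of item 1(a) and item 2 is correct and complete: the identities $\widetilde{X}=X'$, $\widetilde{T}=T$, $\widetilde{X}(0)=\delta$, and the observation that $\tilde{g}\equiv 0$ precisely because $\Phi(0)=0$ (i.e.\ $X(0)=0$ in item 2, $\varphi_i(0)=0$ in item 3) are exactly the points that need checking.

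However, two steps you declare routine would fail if actually carried out. First, in item 1(b) the printed hypothesis $\tilde{h}(x)=k\int_0^x\widetilde{\Phi}(\xi)\,d\xi$ gives $\tilde{h}'=k\widetilde{\Phi}$, whereas substituting $v=\tilde{h}$ into equation (\ref{eq:v}) requires $\tilde{h}''=k\widetilde{\Phi}$, and the boundary condition (\ref{bc:v}) requires $\tilde{h}'(0)=0$, which here reads $k\widetilde{\Phi}(0)=0$ and is false in general. What Theorem \ref{th:PP} applied to Theorem \ref{th:In}, item 2(b) (with $\Phi(0)=0$, needed to make $\tilde{g}\equiv0$) actually induces is $\tilde{h}'(x)=k\int_0^x\widetilde{\Phi}(\xi)\,d\xi$; the statement carries a typo (a missing derivative on $\tilde h$), so the verification you claim ``one checks from the definitions'' does not check, and your remark that $\tilde{h}'(0)=0$ is ``the thing to verify'' leaves open exactly the point that cannot be verified as printed. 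Second, in item 3 the identification $\widetilde{\Phi}=\Phi'$ does carry $\varphi_2,\varphi_3$ onto the prescribed $\cosh$/$\cos$ profiles with $\tilde{\mu}=\mu\lambda$, $\tilde{\lambda}=\lambda$, but it carries $\varphi_1(x)=\lambda x$ onto the constant function $\lambda$, \emph{not} onto $\widetilde{\varphi}_1(x)=\tilde{\lambda}x$ as prescribed; you list only $\varphi_2,\varphi_3$ and silently skip the case that does not match. Moreover, since Propositions \ref{pr:Iru1}--\ref{pr:Iru3} are stated only for odd $m=2p+1$, the correspondence $m=l+1$ forces $l$ to be an even integer, a restriction appearing neither in the statement nor in your write-up. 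These mismatches originate in the paper's own statement and one-line proof, but a blind proof that asserts the data ``are read off'' without computation is glossing over steps that, as written, fail; it should either verify them (they do not hold) or record the corrections under which they do.
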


\begin{proof}
It follows from the previous theorem and the explicit solutions to Problem P obtained in Section \ref{s:EsP}.
\end{proof}

\section*{Acknowledgment}
This paper has been partially sponsored by the Project PIP No. 0534 from CONICET-UA and Grant from Universidad Austral (Rosario, Argentina).

\bibliographystyle{plain}
\bibliography{references}

\appendix

\section*{Appendices}

\section{Problem P is under the hypothesis of Theorem \ref{th:Ir}}\label{app:1}
Let a Problem P with $F$ and $h$ given as in (\ref{th_Ir:F}) and (\ref{def:h}), respectively, and $\Phi$ defined by any of the expressions $\varphi_1$, $\varphi_2$ or $\varphi_3$ given in (\ref{def:phi123}).
\begin{enumerate}
\item It is clear that $h$ is a continuously differentiable function such that $h(0)$ exists. We also have:
\begin{equation}
|h(x)|=|\eta|x^m\leq |\eta|(x+1)^m\leq |\eta|\exp{(mx)},\hspace{0.5cm}\forall\,x>0\text{.}
\end{equation}
Then the inequality (\ref{th_Ir:h}) holds with $\epsilon=1$, $c_1=m$ and $c_0=|\eta|$.
\item It is easy to check that each of the functions $\varphi$ given in (\ref{def:phi123}) is uniformily H$\ddot{\text{o}}$lder continous, with H$\ddot{\text{o}}$lder exponent $\alpha=1$, on any compact set $K\subset\mathbb{R}$.
\item Hypothesis 3 holds because of the definition of the Problem P.
\end{enumerate}

Furthermore, we have:
\begin{enumerate}
\item[i.] If $\Phi=\varphi_1$, then:
\begin{equation}
R(t)=\lambda,\hspace{0.5cm}t>0\text{.}
\end{equation}
Then, the inequality (\ref{th_Ir:f}) holds with a function $f$ defined by:
\begin{equation}
f(t)=-\lambda t,\hspace{0.5cm}t>0\text{.}
\end{equation}
\item[ii.] If $\Phi=\varphi_2$, then:
\begin{equation}
R(t)=-\lambda\mu\exp{(\lambda^2t)},\hspace{0.5cm}t>0\text{.}
\end{equation}
Then the inequality (\ref{th_Ir:f}) holds with a function $f$ defined by:
\begin{equation}
f(t)=-\frac{\mu}{\lambda}\left(\exp{(\lambda^2t)-1}\right),\hspace{0.5cm}t>0\text{.}
\end{equation}
\item[iii.] If $\Phi=\varphi_3$, then:
\begin{equation}
R(t)=-\lambda\mu\exp{(-\lambda^2t)},\hspace{0.5cm}t>0\text{.}
\end{equation}
Then the inequality (\ref{th_Ir:f}) holds with a function $f$ defined by:
\begin{equation}
f(t)=-\frac{\mu}{\lambda}\left(1-\exp{(-\lambda^2t)}\right),\hspace{0.5cm}t>0\text{.}
\end{equation}
\end{enumerate}

\section{Proof of Propositions \ref{pr:Iru1}, \ref{pr:Iru2} and \ref{pr:Iru3}}\label{app:2}
Let a Problem P with $F$ and $h$ given as in (\ref{th_Ir:F}) and (\ref{def:h}), respectively, and $\Phi$ given by any of the expressions in (\ref{def:phi123}).
\subsection*{Computation of $\displaystyle\int_0^{+\infty}G(x,t,\xi,0)h(\xi)d\xi$}
By the definitions of the functions $G$ and $h$ given in (\ref{th_Ir:G}) and (\ref{def:h}), respectively, we have:
\begin{equation}
\displaystyle\int_0^{+\infty}G(x,t,\xi,0)h(\xi)d\xi=\frac{\eta}{2\sqrt{\pi t}}\displaystyle\int_0^{+\infty}\left(\exp{\left(-(x-\xi)^2/4t\right)}-\exp{\left(-(x+\xi)^2/4t\right)}\right)\xi^md\xi\text{.}
\end{equation}
We first compute $\displaystyle\int_0^{+\infty}\exp{\left(-(x-\xi)^2/4t\right)}\xi^md\xi$.

\noindent By doing the substitution $\zeta=(x-\xi)/2\sqrt{t}$, we have:
\begin{equation}
\begin{split}
\displaystyle\int_0^{+\infty}\exp{\left(-(x-\xi)^2/4t\right)}\xi^md\xi&=2\sqrt{t}\displaystyle\int_{-\infty}^{x/2\sqrt{t}}\exp{\left(-\zeta^2\right)}\left(x-2\sqrt{t}\zeta\right)^md\zeta\\
&=2\sqrt{t}\displaystyle\sum_{k=0}^m\binom{m}{k}\left(-2\sqrt{t}\right)^kx^{m-k}\displaystyle\int_{-\infty}^{x/2\sqrt{t}}\exp{\left(-\zeta^2\right)\zeta^k}d\zeta\text{.}
\end{split}
\end{equation}
Since:
\begin{equation}
\begin{split}
\displaystyle\int_{-\infty}^{x/2\sqrt{t}}\exp{\left(-\zeta^2\right)\zeta^k}d\zeta&=
\displaystyle\int_{-\infty}^0\exp{\left(-\zeta^2\right)\zeta^k}d\zeta+\displaystyle\int_0^{x/2\sqrt{t}}\exp{\left(-\zeta^2\right)\zeta^k}d\zeta\\
&=(-1)^k\displaystyle\int_0^{+\infty}\exp{\left(-\sigma^2\right)\sigma^k}d\sigma+\displaystyle\int_0^{x/2\sqrt{t}}\exp{\left(-\zeta^2\right)\zeta^k}d\zeta\\
&=\frac{(-1)^k}{2}\Gamma\left(\frac{k+1}{2}\right)+\displaystyle\int_0^{x/2\sqrt{t}}\exp{\left(-\zeta^2\right)\zeta^k}d\zeta\text{,}
\end{split}
\end{equation}
then we have:
\begin{equation}
\begin{split}
\displaystyle\int_0^{+\infty}\exp{\left(-(x-\xi)^2/4t\right)}\xi^md\xi&=
\sqrt{t}\displaystyle\sum_{k=0}^m\binom{m}{k}\left(2\sqrt{t}\right)^kx^{m-k}\Gamma\left(\frac{k+1}{2}\right)+\\
&2\sqrt{t}\displaystyle\sum_{k=0}^m\binom{m}{k}\left(-2\sqrt{t}\right)^kx^{m-k}\displaystyle\int_0^{x/2\sqrt{t}}\exp{\left(-\zeta^2\right)\zeta^k}d\zeta\text{.}
\end{split}
\end{equation}

\noindent By similar calculations, we have:
\begin{equation}
\begin{split}
\displaystyle\int_0^{+\infty}\exp{\left(-(x+\xi)^2/4t\right)}\xi^md\xi&=
\sqrt{t}\displaystyle\sum_{k=0}^m(-1)^{m-k}\binom{m}{k}\left(2\sqrt{t}\right)^kx^{m-k}\Gamma\left(\frac{k+1}{2}\right)+\\
&2\sqrt{t}\displaystyle\sum_{k=0}^m(-1)^{m-k}\binom{m}{k}\left(-2\sqrt{t}\right)^kx^{m-k}\displaystyle\int_0^{x/2\sqrt{t}}\exp{\left(-\zeta^2\right)\zeta^k}d\zeta\text{.}
\end{split}
\end{equation}
Therefore, we have:
\begin{equation}\label{app_2:Gh}
\begin{split}
\displaystyle\int_0^{+\infty}G(x,t,\xi,0)h(\xi)d\xi&=\frac{\eta}{2\sqrt{\pi t}}\left(\sqrt{t}\displaystyle\sum_{k=0}^m\left(1-(-1)^{m-k}\right)\binom{m}{k}\left(2\sqrt{t}\right)^kx^{m-k}\Gamma\left(\frac{k+1}{2}\right)\right.+\\
&\left.2\sqrt{t}\displaystyle\sum_{k=0}^m\left(1-(-1)^{m+1}\right)\binom{m}{k}\left(-2\sqrt{t}\right)^kx^{m-k}\displaystyle\int_0^{x/2\sqrt{t}}\exp{\left(-\zeta^2\right)\zeta^k}d\zeta\right)\\
&=\frac{\eta}{2\sqrt{\pi}}\displaystyle\sum_{k=0}^m\left(1-(-1)^{k+1}\right)\binom{m}{k}\left(2\sqrt{t}\right)^kx^{m-k}\Gamma\left(\frac{k+1}{2}\right)\\
&=\frac{\eta}{\sqrt{\pi}}\displaystyle\sum_{k=0}^{p}\binom{m}{2k}\left(4t\right)^kx^{m-2k}\Gamma\left(\frac{2k+1}{2}\right)\text{,}\hspace{0.5cm}x>0\text{.}
\end{split}
\end{equation}
\subsection*{Computation of $\displaystyle\int_0^{+\infty}G(x,t,\xi,\tau)\Phi(\xi)d\xi$}
\begin{enumerate}
\item By the definitions of the functions $G$ and $\Phi=\varphi_1$ given in (\ref{th_Ir:G}) and (\ref{def:phi123}), respectively, we have:
\begin{equation}
\begin{split}
\displaystyle\int_0^{+\infty}G(x,t,\xi,\tau)\varphi_1(\xi)d\xi&=\frac{\lambda}{2\sqrt{\pi(t-\tau)}}\displaystyle\int_0^{+\infty}\left(\exp{\left(-(x-\xi)^2/4(t-\tau)\right)}\right.-\\
&\left.\exp{\left(-(x+\xi)^2/4(t-\tau)\right)}\right)\xi d\xi\text{.}
\end{split}
\end{equation}
By replacing $t$ by $(t-\tau)$, $\eta$ by $\lambda$ and $m$ by 1 in the precedent calculation, we have:
\begin{equation}\label{app_2:Gphi1}
\displaystyle\int_0^{+\infty}G(x,t,\xi,\tau)\varphi_1(\xi)d\xi=\varphi_1(x)\text{,}\hspace{0.5cm}x>0\text{.}
\end{equation}

\item By the definitions of the functions $G$ and $\Phi=\varphi_2$ given in (\ref{th_Ir:G}) and (\ref{def:phi123}), respectively, we have:
\begin{equation}
\begin{split}
\displaystyle\int_0^{+\infty}G(x,t,\xi,\tau)\varphi_2(\xi)d\xi&=-\frac{\mu}{2\sqrt{\pi(t-\tau)}}\displaystyle\int_0^{+\infty}\left(\exp{\left(-(x-\xi)^2/4(t-\tau)\right)}\right.-\\
&\left.\exp{\left(-(x+\xi)^2/4(t-\tau)\right)}\right)\sinh{(\lambda\xi)} d\xi\text{.}
\end{split}
\end{equation}
We first compute $\displaystyle\int_0^{+\infty}\exp{\left(-(x-\xi)^2/4(t-\tau)\right)}\sinh{(\lambda\xi)}d\xi$.

By doing the change of variables $\zeta=(x-\xi)/2\sqrt{t-\tau}$, we have:
\begin{multline}
\displaystyle\int_0^{+\infty}\exp{\left(-(x-\xi)^2/4(t-\tau)\right)}\exp{(\lambda\xi)}d\xi=\\
2\sqrt{t-\tau}\exp{(\lambda x)}\displaystyle\int_{-\infty}^{x/2\sqrt{t-\tau}}\exp{\left(-\zeta^2-2\lambda\sqrt{t-\tau}\zeta\right)}d\zeta\text{.}
\end{multline}
By writing:
\begin{equation}
\zeta^2+2\lambda\sqrt{t-\tau}\zeta=\left(\zeta+\lambda\sqrt{t-\tau}\right)^2-\lambda^2(t-\tau)
\end{equation}
and doing the change of variables $\sigma=\zeta+\lambda\sqrt{t-\tau}$, we have:
\begin{equation}
\begin{split}
&2\sqrt{t-\tau}\exp{(\lambda x)}\displaystyle\int_{-\infty}^{x/2\sqrt{t-\tau}}\exp{(-\zeta^2-2\lambda\sqrt{t-\tau}\zeta)}d\zeta=\\
&=2\sqrt{t-\tau}\exp{\left(\lambda x+\lambda^2(t-\tau)\right)}\displaystyle\int_{-\infty}^{x/2\sqrt{t-\tau}}\exp{\left(-\left(\zeta+\lambda\sqrt{t-\tau}\right)^2\right)}d\zeta\\
&=2\sqrt{t-\tau}\exp{\left(\lambda x+\lambda^2(t-\tau)\right)}\displaystyle\int_{-\infty}^{x/2\sqrt{t-\tau}+\lambda\sqrt{t-\tau}}\exp{(-\sigma^2)}d\sigma\\
&=\sqrt{\pi(t-\tau)}\exp{\left(\lambda x+\lambda^2(t-\tau)\right)}\left(1+\text{erf}\left(\frac{x}{2\sqrt{t-\tau}}+\lambda\sqrt{t-\tau}\right)\right)\text{,}
\end{split}
\end{equation}
where erf is the error function, defined by:
\begin{equation}
\text{erf}(z)=\displaystyle\int_0^z\exp{(-\xi^2)}d\xi,\hspace{0.5cm}z\in\mathbb{R}\text{.}
\end{equation}
Hence, we have:
\begin{multline}
\displaystyle\int_0^{+\infty}\exp{\left(-(x-\xi)^2/4(t-\tau)\right)}\exp{(\lambda\xi)}d\xi=\\
\sqrt{\pi(t-\tau)}\exp{\left(\lambda x+\lambda^2(t-\tau)\right)}\left(1+\text{erf}\left(\frac{x}{2\sqrt{t-\tau}}+\lambda\sqrt{t-\tau}\right)\right)\text{.}
\end{multline}
By replacing $\lambda$ by $-\lambda$ in the previous calculations, we have:
\begin{multline}
\displaystyle\int_0^{+\infty}\exp{\left(-(x-\xi)^2/4(t-\tau)\right)}\exp{(-\lambda\xi)}d\xi=\\
\sqrt{\pi(t-\tau)}\exp{\left(-\lambda x+\lambda^2(t-\tau)\right)}\left(1+\text{erf}\left(\frac{x}{2\sqrt{t-\tau}}-\lambda\sqrt{t-\tau}\right)\right)\text{.}
\end{multline}
Therefore, we have:
\begin{equation}
\begin{split}
&\displaystyle\int_0^{+\infty}\exp{\left(-(x-\xi)^2/4(t-\tau)\right)}\sinh{(\lambda\xi)}d\xi=\\
&=\frac{\sqrt{\pi(t-\tau)}}{2}\exp{\left(\lambda^2(t-\tau)\right)}
\left(\exp{(\lambda x)}\left(1+\text{erf}\left(\frac{x}{2\sqrt{t-\tau}}+\lambda\sqrt{t-\tau}\right)\right)-\right.\\
&\left.\exp{(-\lambda x)}\left(1+\text{erf}\left(\frac{x}{2\sqrt{t-\tau}}-\lambda\sqrt{t-\tau}\right)\right)
\right)\text{.}
\end{split}
\end{equation}
By similar calculations, we have:
\begin{equation}
\begin{split}
&\displaystyle\int_0^{+\infty}\exp{\left(-(x+\xi)^2/4(t-\tau)\right)}\sinh{(\lambda\xi)}d\xi=\\
&=\frac{\sqrt{\pi(t-\tau)}}{2}\exp{\left(\lambda^2(t-\tau)\right)}
\left(\exp{(-\lambda x)}\left(1-\text{erf}\left(\frac{x}{2\sqrt{t-\tau}}-\lambda\sqrt{t-\tau}\right)\right)-\right.\\
&\left.\exp{(\lambda x)}\left(1-\text{erf}\left(\frac{x}{2\sqrt{t-\tau}}+\lambda\sqrt{t-\tau}\right)\right)
\right)\text{.}
\end{split}
\end{equation}
Then, we have:
\begin{equation}\label{app_2:Gphi2}
\displaystyle\int_0^{+\infty}G(x,t,\xi,\tau)\varphi_2(\xi)d\xi=\exp{\left(\lambda^2(t-\tau)\right)}\varphi_2(x)\text{,}\hspace{0.5cm}x>0\text{.}
\end{equation}

\item By the definitions of the functions $G$ and $\Phi=\varphi_3$ given in (\ref{th_Ir:G}) and (\ref{def:phi123}), respectively, we have:
\begin{equation}
\begin{split}
\displaystyle\int_0^{+\infty}G(x,t,\xi,\tau)\varphi_3(\xi)d\xi&=-\frac{\mu}{2\sqrt{\pi(t-\tau)}}\displaystyle\int_0^{+\infty}\left(\exp{\left(-(x-\xi)^2/4(t-\tau)\right)}\right.-\\
&\left.\exp{\left(-(x+\xi)^2/4(t-\tau)\right)}\right)\sin{(\lambda\xi)} d\xi\text{.}
\end{split}
\end{equation}

We first compute $\displaystyle\int_0^{+\infty}\exp{\left(-(x-\xi)^2/4(t-\tau)\right)}\sin{(\lambda\xi)}d\xi$.

\noindent By doing the change of variables $\zeta=(x-\xi)/2\sqrt{t-\tau}$, we have:
\begin{multline}
\displaystyle\int_0^{+\infty}\exp{\left(-(x-\xi)^2/4(t-\tau)\right)}\sin{(\lambda\xi)}d\xi=\\
2\sqrt{t-\tau}\displaystyle\int_{-\infty}^{x/2\sqrt{t-\tau}}\exp{(-\zeta^2)}\left(\sin{(\lambda x)}\cos{\left(2\lambda\sqrt{t-\tau}\zeta\right)}-\cos{(\lambda x)}\sin{\left(2\lambda\sqrt{t-\tau}\zeta\right)}\right)d\zeta\text{.}
\end{multline}
By using the identities (see \cite{NgGe1969}, p. 4):
\begin{equation}
\displaystyle\int\exp{(-\zeta^2)}\cos{(\alpha\zeta)}d\zeta=
\frac{\sqrt{\pi}}{4}\exp{\left(-\alpha^2/4\right)}
\left(\text{erf}\left(\zeta+\frac{\alpha}{2}i\right)+\text{erf}\left(\zeta-\frac{\alpha}{2}i\right)\right)
\end{equation}
and
\begin{equation}
\displaystyle\int\exp{(-\zeta^2)}\sin{(\alpha\zeta)}d\zeta=
\frac{\sqrt{\pi}i}{4}\exp{\left(-\alpha^2/4\right)}\left(\text{erf}\left(\zeta+\frac{\alpha}{2}i\right)-\text{erf}\left(\zeta-\frac{\alpha}{2}i\right)\right)\text{,}
\end{equation}
where $\alpha\in\mathbb{R}$ and $i$ denotes the imaginary unit, we have:
\begin{multline}
\displaystyle\int_{-\infty}^{x/2\sqrt{t-\tau}}\exp{(-\zeta^2)}\cos{\left(2\lambda\sqrt{t-\tau}\zeta\right)}d\zeta=\\
\frac{\sqrt{\pi}}{4}\exp{\left(-\lambda^2(t-\tau)\right)}\left(\text{erf}\left(\frac{x}{2\sqrt{t-\tau}}+i\lambda\sqrt{t-\tau}\right)+
\text{erf}\left(\frac{x}{2\sqrt{t-\tau}}-i\lambda\sqrt{t-\tau}\right)+2\right)
\end{multline}
and
\begin{multline}
\displaystyle\int_{-\infty}^{x/2\sqrt{t-\tau}}\exp{(-\zeta^2)}\sin{\left(2\lambda\sqrt{t-\tau}\zeta\right)}d\zeta=\\
\frac{\sqrt{\pi}i}{4}\exp{\left(-\lambda^2(t-\tau)\right)}\left(\text{erf}\left(\frac{x}{2\sqrt{t-\tau}}+i\lambda\sqrt{t-\tau}\right)-
\text{erf}\left(\frac{x}{2\sqrt{t-\tau}}-i\lambda\sqrt{t-\tau}\right)\right)\text{.}
\end{multline}
Then, we have:
\begin{equation}
\begin{split}
&\displaystyle\int_0^{+\infty}\exp{\left(-(x-\xi)^2/4(t-\tau)\right)}\sin{(\lambda\xi)}d\xi=\\
&\frac{\sqrt{\pi(t-\tau)}}{2}\exp{(-\lambda^2(t-\tau))}
\left(\text{erf}\left(\frac{x}{2\sqrt{t-\tau}}+i\lambda\sqrt{t-\tau}\right)\sin{(\lambda x)}+\right.\\
&\left.\text{erf}\left(\frac{x}{2\sqrt{t-\tau}}-i\lambda\sqrt{t-\tau}\right)\sin{(\lambda x)}-\text{erf}\left(\frac{x}{2\sqrt{t-\tau}}+i\lambda\sqrt{t-\tau}\right)i\cos{(\lambda x)}\right.+\\
&\left.\text{erf}\left(\frac{x}{2\sqrt{t-\tau}}-i\lambda\sqrt{t-\tau}\right)i\cos{(\lambda x)}+2\sin{(\lambda x)}\right)\text{.}
\end{split}
\end{equation}
By similar calculations, we have:
\begin{equation}
\begin{split}
&\displaystyle\int_0^{+\infty}\exp{\left(-(x+\xi)^2/4(t-\tau)\right)}\sin{(\lambda\xi)}d\xi=\\
&\frac{\sqrt{\pi(t-\tau)}}{2}\exp{(-\lambda^2(t-\tau))}
\left(\text{erf}\left(\frac{x}{2\sqrt{t-\tau}}-i\lambda\sqrt{t-\tau}\right)i\cos{(\lambda x)}-\right.\\
&\left.\text{erf}\left(\frac{x}{2\sqrt{t-\tau}}+i\lambda\sqrt{t-\tau}\right)i\cos{(\lambda x)}+\text{erf}\left(\frac{x}{2\sqrt{t-\tau}}+i\lambda\sqrt{t-\tau}\right)\sin{(\lambda x)}\right.+\\
&\left.\text{erf}\left(\frac{x}{2\sqrt{t-\tau}}-i\lambda\sqrt{t-\tau}\right)\sin{(\lambda x)}-2\sin{(\lambda x)}\right)\text{.}
\end{split}
\end{equation}
Then, we have:
\begin{equation}\label{app_2:Gphi3}
\displaystyle\int_0^{+\infty}G(x,t,\xi,\tau)\varphi_3(\xi)d\xi=\exp{\left(-\lambda^2(t-\tau)\right)}\varphi_3(x)\text{,}\hspace{0.5cm}x>0\text{.}
\end{equation}
\end{enumerate}
The proofs of propositions \ref{pr:Iru1}, \ref{pr:Iru2} and  \ref{pr:Iru3} follow from the expression for $u$ given in (\ref{th_Ir:u}), the expression for $\displaystyle\int_0^{+\infty}G(x,t,\xi,0)h(\xi)d\xi$ obtained in (\ref{app_2:Gh}) and the expression for $\displaystyle\int_0^{+\infty}G(x,t,\xi,\tau)\Phi(\xi)d\xi$ obtained in (\ref{app_2:Gphi1}), (\ref{app_2:Gphi2}) and (\ref{app_2:Gphi3}), respectively.

\end{document}